\newtheorem{proposition}{Proposition}
\newtheorem{theorem}{Theorem}
\newtheorem{lemma}{Lemma}
\newtheorem{remark}{Remark}
\newcommand{\p}{\mathbbm{P}}
\newcommand{\e}{\mathbbm{E}}
\newcommand{\N}{\mathbbm{N}}
\newcommand{\A}{\mathcal{A}}
\newcommand{\LL}{\mathscr{L}}
\newcommand{\HH}{\mathcal{H}}
\newcommand{\M}{\mathcal{M}}
\begin{document}

\title{Median inverse problem and approximating the number of $k$-median inverses of a permutation\thanks{Partially supported by CNPq, FAPERJ and NSERC. DS holds the Canada Research Chair in Mathematical Genomics.}}

\author{Poly H. da Silva$^1$, Arash Jamshidpey$^2$ and David Sankoff$^3$}

 \footnotetext[1]{Fluminense Federal University, Brazil}
 \footnotetext[2]{IMPA, Brazil}
 \footnotetext[3]{University of Ottawa, Canada}
\maketitle

\begin{abstract}
We introduce the ``\textit{Median Inverse Problem}'' for metric spaces. In particular, having a permutation $\pi$ in the symmetric group $S_n$ (endowed with the breakpoint distance), we study the set of all $k$-subsets $\{x_1,...,x_k\}\subset S_n$ for which $\pi$ is a breakpoint median. The set of all $k$-tuples $(x_1,...,x_k)$ with this property is called the $k$-median inverse of $\pi$. Finding an upper bound for the cardinality of this set, we provide an asymptotic upper bound for the probability that $\pi$ is a breakpoint median of $k$ permutations $\xi_1^{(n)},...,\xi_k^{(n)}$ chosen uniformly and independently at random from $S_n$.
\end{abstract}

\section{Introduction}\label{section1}

Each genome has a specific order of genes (markers or sites) in its chromosome. In the case that duplicated genes do not exist, we can represent each unichromosomal genome by a permutation. A simple way to compare two unichromosomal genomes with the same set of genes is to measure their dissimilarities, that is, if in the first genome two genes are adjacent and in the second one they are not adjacent, we have then a \emph{breakpoint}. The total number of breakpoints is a metric of dissimilarity, introduced by Sankoff and Blanchette~\cite{sankoff97} in 1997.

On the other hand, one can use the definition of \emph{median}~\cite{sankoff97} to compare three or more genomes. Given a set of genomes $X=\{g_1,...,g_k\}$ (all permutations with the same length) and a genomic distance $\rho$, a median for the set $X$ is a genome that minimizes the total distance function $\rho_T(\cdot,X):=\sum_{i=1}^k \rho(\cdot,g_i)$. The minimum value of $\rho_T(.,X)$ is called the median value of the set $X$. 
The median problem is the problem of finding a median for a set of given genomes $X$.

Medians play an important role in small and large phylogeny problems. In some evolutionary models, at least one of the medians of some species carries valuable information about their first common ancestor or even about the phylogenetic tree. The median problem has been extensively studied for different genome distances, and for most of them, including breakpoint distance, it is an NP-hard problem~\cite{bryant98,caprara03,tannier09,fertincombinatorics}.


 In this paper, we introduce the \textit{``Median Inverse Problem'' (MIP)}, as an alternative approach to study the breakpoint medians, that is, for a given permutation $\pi$, we study the probability that $\pi$ is a median of a set of $k$ permutations chosen uniformly and independently at random from 
$S_n$. This opens another way to study medians of random sets. More formally, denote by $\M_n(B)$ the set of all breakpoint medians of $B\subset S_n$. Given a set $A\subset S_n$, we look for all $k$-tuples $(x_1,...,x_k)\in S_n^k$ with $A\subset \M_n(\{x_1,...,x_k\})$. That is, the $k$-median inverse of $A$ is defined by
\[
\M_{n,k}^{-1}(A):=\{(x_1,...,x_k)\in S_n^k: \ A\subset \M_n(\{x_1,...,x_k\})\}.
\]
We find an upper bound for the cardinality of the above set when $A$ is singleton. We approximate the asymptotic probability that $k$ independent random permutations in $S_n$ have a given median $\pi\in S_n$, and find an upper bound for it. More precisely, an adjacency of a permutation $x=x_1 \ ...\ x_n$ is an unordered pair of adjacent numbers $\{x_i,x_{i+1}\}=\{x_{i+1},x_i\}$, for $i=1,...,n-1$. Denote by $\mathcal{A}_x$, the set of all adjacencies of permutation $x$. In general, for $k$ permutations $X=\{x_1,...,x_k\}$, there may exist a median $\pi$ such that $\mathcal{A}_\pi\setminus \cup_{i=1}^k \mathcal{A}_{x_i} \neq \emptyset$. We find an upper bound for $\max\{|\mathcal{A}_\pi\setminus \cup_{i=1}^k \mathcal{A}_{x_i}|: \ \pi\in \M_n(X)\}$. Denote this upper bound by $\mathcal{O}_n(X)$. Let $\xi_1^{(n)},...,\xi_k^{(n)}$ be $k$ permutations chosen uniformly and independently at random from $S_n$. Then, for any sequence of real numbers $(a_n)_{n\in \N}$ tending to $\infty$, we show that $|\mathcal{O}_n(\{\xi_1^{(n)},...,\xi_k^{(n)}\})|/a_n\rightarrow 0$, in probability, as $n$ goes to $\infty$. For $c\geq 0$, letting
\[
\mathscr{L}_{n,k,c}^{-1}(\{\pi\})=\{(x_1,...,x_k)\in S_n^k: |\mathcal{A}_\pi\setminus \bigcup\limits_{x\in X} \mathcal{A}_x|\leq c\},
\]
we prove that
\[
\limsup_n \mathbb{P}((\xi_1^{(n)},...,\xi_k^{(n)})\in M_{n,k}^{-1}(\{\pi_n\})) \leq \limsup_n \mathbb{P}((\xi_1^{(n)},...,\xi_k^{(n)})\in \mathscr{L}_{n,k,a_n}^{-1}(\{\pi_n\})),
\]
for any arbitrary sequence of $(\pi_n)_{n\in \N}$, with $\pi_n\in S_n$. Finally we give an exact expression for the number of elements of $\mathscr{L}_{n,k,a_n}^{-1}(\{\pi_n\})$.

\section{Background}
A permutation of length $n$ is a bijection on $[n]:=\{1,...,n\}$. A permutation $\pi$ is denoted by
\[
{1 \ \ 2 \ \ ... \ \ \ n \choose \pi_1 \ \pi_2 \ \ ... \ \ \pi_n},
\]
or simply by $\pi_{1} \ \pi_2 \ \ ... \ \ \pi_{n}$. We represent a linear unichromosomal genome with $n$ genes or markers by a permutation of length $n$. Each number represents a gene or a marker in the genome. The set of all permutations of length $n$ with the function composition operator is a group called \textit{symmetric group} of order $n$ denoted by $S_n$. We denote by $id:=id^{(n)}$ the identity permutation $1 \ 2 \ 3 \ ... \ n$. For a permutation $\pi := \pi_1 \  ... \ \pi_n$, any unordered pair $\{\pi_i , \pi_{i+1}\}=\{\pi_{i+1} , \pi_i\}$, for $i=1, ..., n-1$, is called an adjacency of $\pi$. We denote by $\mathcal{A}_{\pi}$ the set of all adjacencies of $\pi$ and by $\mathcal A_{x_1,...,x_k}$ the set of all common adjacencies of $x_1,...,x_k\in S_n$. For any $x,y \in S_n$, the \textit{breakpoint distance} (bp distance) between $x$ and $y$ is defined by $d^{(n)}(x,y):=n-1-|\mathcal A_{x,y}|$ which is a pseudometric. 
We say a pseudometric (or a metric) $\rho$ is left-invariant on a group $G$ if for any $x,y,z \in G$, $\rho(x,y)=\rho(zx, zy)$. The bp distance is a left-invariant pseudometric on $S_n$. We say two permutations $\pi$ and $\pi'$ in $S_n$ are equivalent, denoted by $\pi\sim \pi'$, if $d^{(n)}(\pi,\pi')=0$. In other words they are equivalent if $\pi_i=\pi'_{n+1-i}$, for $i=1,...,n$. The set of all equivalence classes of $S_n$ under $\sim$, denoted by $\hat{S}_n:=S_n/\sim$, endowed with $d^{(n)}$ is a metric space.

For a metric (or pseudometric) space $(S,\rho)$, $z\in S$ is said to be a geodesic point of $x,y\in S$, if $\rho(x,y)=\rho(x,z)+\rho(z,y)$. In the case of permutations (or permutation classes), we also call a geodesic point, a geodesic permutation (or a geodesic permutation class). 
We denote by $\overline {[x,y]}_S$ the set of all geodesic points of $x,y$ in a metric space $(S,\rho)$, and, in particular, by $\overline {[x,y]}$ the set of all geodesic points of $x,y\in \hat{S}_n$, with the breakpoint distance.

For a metric (or pseudometric) space $(S,\rho)$, let us define the total distance of a point $x\in S$ to a finite subset $A\subset S$ by
\[
\rho_T(x,A):=\sum\limits_{y\in A} \rho(x,y).
\]
A \emph{median} of a finite subset $A\subseteq S$  is a point of $S$ (not necessarily unique) whose total distance to $A$ takes the infimum (respectively, minimum for a finite space $S$), i.e.  a point $x\in S$ such that
\[
\rho_T(x,A)=\inf_{y\in S} \rho_T(y,A).
\]
For the finite space $S$, ``\textit{inf}'' is replaced by ``\textit{min}'' in the above definition, that is $x\in S$ is a median of $A$ if it minimizes the total distance function $\rho_T(.,A)$. Furthermore, the median value of $A$, denoted by $\mu(A)$, is the infimum (respectively, minimum) value of the total distance function to $A$. We denote by $\mathcal{M}_{S,\rho}(A)$ the set of all medians of $A$. In particular, we denote by $d_T^{(n)}(x,A)$ the total breakpoint distance of permutation $x\in S_n$ to $A\subset S_n$, and by $\mathcal{M}_n(A)$ the set of all breakpoint medians of $A$, that is $\mathcal{M}_n(A):=\mathcal{M}_{S_n,d^{(n)}}(A) $. There always exists a median (not necessarily unique) for any subset of a finite metric space, while this is not true for general infinite metric spaces. In the simple case of two points $x$ and $y$ in a general metric space, it is clear from definition that every median of $x$ and $y$  is a geodesic point of them and vice versa. That is, $\overline {[x,y]}$ is the set of medians of $x$ and $y$.

Common adjacencies of permutations can be regarded as a set of segments. A segment (of $S_n$) is a set of consecutive adjacencies of a permutation of length $n$. More explicitly, a segment of length $k\in [n-1]$ is a set of adjacencies 
\[
\{\{n_0,n_1\},\{n_1,n_2\},...,\{n_{k-2},n_{k-1}\},\{n_{k-1},n_k\}\},
\]
where $n_0,n_1,...,n_k\in [n]$ are different natural numbers. It also can be denoted by $[n_0,n_1,...,n_k]$ or equivalently by $[n_k,...,n_1,n_0]$. In particular, any segment of length $n-1$ is the set of adjacencies of a permutation (class) and vice versa. By convention, we assume that the empty set $\emptyset$ is a segment. We say a segment $s$ is a subsegment of a segment $s'$ if $s\subset s'$. For a given permutation $\pi=\pi_1 \ ... \ \pi_n\in S_n$, for $i\leq j$, the segment $[\pi_i,\pi_{i+1},...,\pi_j]=[\pi_j,...,\pi_{i+1},\pi_i]$ is denoted by $s_{ij}=s_{ij}^\pi$ and is called a segment of $\pi$. We denote by $|s|$ the length of a segment $s$. 
Note that a segment is originally defined as a set of adjacencies and therefore all set operations can be applied on it. Two segments $s=[n_0,...,n_k]$ and $s'=[m_0,...,m_{k'}]$ are said to be strongly disjoint if $\{n_0,...,n_k\}\cap \{m_0,...,m_{k'}\}=\emptyset$. They are disjoint if $s\cap s'=\emptyset$, otherwise we say that they intersect. 

Also, by a set of segments (segment set) of $S_n$, we mean the union of some pairwisely strongly disjoint segments of $S_n$. In other words, a set of segments or a segment set $I$ is a subset of $\mathcal{A}_\pi$ for a permutation $\pi$. In this case, we say $I$ is a segment set of $\pi$ or $\pi$ contains $I$. It is clear that a segment set can be contained in more than one permutation, or in other words, it can be contained in the intersection of adjacencies of several permutations. Denote by $\mathcal{I}^{(n)}$ the set of all segment sets of $S_n$. By a segment (or component) of a segment set $I$ we mean a maximal segment contained in $I$, and to show a segment $s$ is a segment of $I$, we denote $s\hat{\in} I$. Although a segment set $I$ containing segments $s_1,...,s_k$ is in principle the union of adjacencies of $s_i$'s, that is $I=\cup_{i=1}^k s_i$, to ease the notation, we sometime denote it by $\{s_1,...,s_k\}$. Also we denote by $\|I\|:=k$, the number of segments of $I$. Note that the notation $|\ .\ |$ is used for both cardinality of a set and absolute value of a real number. For example, as we already indicated for a segment $s$, $|s|$ is the number of adjacencies of $s$, and,  by the original definition of a segment set as a union of segments, 
\[
|I|=\sum\limits_{i=1}^{\|I\|} |s_i|
\]
is the number of adjacencies of $I$. 

It is clear that the intersection of segments (and in general, the intersection of segment sets) is  always a segment set. Two segment sets $I$ and $J$ (in particular two segments $s$ and $s'$, respectively) are said to be consistent, if their union is contained in $\mathcal{A}_\pi$, for a permutation class $\pi$. In particular, any two segment sets of a permutation $\pi$ are consistent. For example, for $n=10$, two segments $[3,7,10,2,5]$ and $[2,5,8,1]$ are consistent and their union is the segment $[3,7,10,2,5,8,1]$, while two segment $[2,6,3,8,1]$ and $[8,1,4,7,6,3,5]$ are not consistent. When we speak of union of two or more segment sets (respectively, two or more segments) we always assume that they are, pairwisely, consistent. 
We say segment sets $I_1,...,I_k$ completes each others if there exists a permutation $\pi$ such that $\cup_{i=1}^k I_i=\mathcal{A}_\pi$.

\section{Results}
We investigate the probability that $k$ permutations chosen uniformly and independently at random from $S_n$ has a specified median $\pi\in S_n$. To this end, we introduce the``\textit{Median Inverse Problem (MIP)}'' as follows. Given a set of permutations $A\subset S_n$, find the set of all $k$-tuples in $S_n^k$ for which any element of $A$ is a median. The set of all $k$-tuples with the mentioned property is called the $k$-median inverse of $A$. This actually opens another way to study medians. Recall that for a finite metric (or pseudometric) space $(S, \rho)$, we denote by $\mathcal{M}_{S,\rho}(A)$ the set of all medians of $A$, and in particular, $\mathcal{M}_n(A):=\mathcal{M}_{S_n,d^{(n)}}(A)$ is the set of all breakpoint medians of $A\subset S_n$. Given a point $x\in S$, we look for all $k$-tuples $(x_1,...,x_k)\in S^k$ with $x\in \mathcal{M}_{S,\rho}(\{x_1,...,x_k\})$, that is the $k$-median inverse of $x$ is
\[
\mathcal{M}_{S,\rho,k}^{-1}(x):=\{(x_1,...,x_k)\in S^k: \ x\in \mathcal{M}_{S,\rho}(\{x_1,...,x_k\})\}.
\]
For the special case of breakpoint $k$-median inverse of a permutation $\pi\in S_n$, we write
\[
\mathcal{M}_{n,k}^{-1}(\pi):=\{(x_1,...,x_k)\in S_n^k: \ x\in \mathcal{M}_n(\{x_1,...,x_k\})\}.
\]
For example, if $(S,\rho)$ is a pseudometric space,
\[
\mathcal{M}_{S,\rho,1}^{-1}(x)=[x]:=\{y\in S: \rho(x,y)=0\}
\]
and if it is a metric space then 
\[
\mathcal{M}_{S,\rho,2}^{-1}(x)=\{(x_1,x_2)\in S^2: \ x\in\overline{[x_1,x_2]}\}.
\]
The notion of median inverse can be easily generalized as follows. Let $A\subset S$. The $k$-median inverse of $A$ is defined by
\[
\mathcal{M}_{S,\rho,k}^{-1}(A):=\{(x_1,...,x_k)\in S^k: \ A\subset \mathcal{M}_{S,\rho}(\{x_1,...,x_k\})\}.
\]
In particular, the breakpoint $k$-median inverse of $A \subset S_n$ is
\[
\mathcal{M}_{n,k}^{-1}(A):=\{(x_1,...,x_k)\in S_n^k: \ A\subset \mathcal{M}_n(\{x_1,...,x_k\})\}.
\]
In this paper we approximate the asymptotic probability that $k$ independent random permutations in $S_n$ have a given median $\pi\in S_n$ and find an upper bound for it.

For $k$ permutations with pairwise maximum distance $n-1$ from each other, one can see that a permutation $\pi$ is their median if and only if every adjacency of $\pi$ is an adjacency of at least one of those $k$ permutations. In other words, we have the following result.
\begin{proposition}[Jamshidpey et. al. \cite{jam14}]\label{extreme}
Let $X \subset S_n$ such that $d(x,y)=n-1$ for any $x,y\in X$. Then $\pi$ is a median of $X$ if and only if $\mathcal A_{\pi} \subset \bigcup\limits_{x\in X} \mathcal A_x$.
\end{proposition}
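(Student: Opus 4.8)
The plan is to translate the statement about medians into one about maximizing shared adjacencies, and then to exploit the extremal hypothesis $d(x,y)=n-1$, which by the definition of the breakpoint distance means $|\mathcal{A}_{x,y}|=0$; that is, distinct elements of $X$ share no adjacency. Write $X=\{x_1,\dots,x_k\}$. Since $d(\pi,x_i)=n-1-|\mathcal{A}_\pi\cap\mathcal{A}_{x_i}|$, the total breakpoint distance decomposes as
\[
d_T(\pi,X)=\sum_{i=1}^k d(\pi,x_i)=k(n-1)-\sum_{i=1}^k|\mathcal{A}_\pi\cap\mathcal{A}_{x_i}|,
\]
so that minimizing $d_T(\cdot,X)$ is equivalent to maximizing the quantity $\Sigma(\pi):=\sum_{i=1}^k|\mathcal{A}_\pi\cap\mathcal{A}_{x_i}|$ over all $\pi\in S_n$.

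The central step is to bound $\Sigma(\pi)$. Rewriting the sum by counting over the adjacencies of $\pi$,
\[
\Sigma(\pi)=\sum_{a\in\mathcal{A}_\pi}\bigl|\{i:\ a\in\mathcal{A}_{x_i}\}\bigr|.
\]
Here the hypothesis enters decisively: because no adjacency can belong to two distinct $x_i$ (otherwise they would share an adjacency, contradicting $d(x_i,x_j)=n-1$), each inner count is either $0$ or $1$. Hence $\Sigma(\pi)$ simply counts those adjacencies of $\pi$ lying in $\bigcup_{x\in X}\mathcal{A}_x$, which gives
\[
\Sigma(\pi)=\Bigl|\mathcal{A}_\pi\cap\bigcup_{x\in X}\mathcal{A}_x\Bigr|\le|\mathcal{A}_\pi|=n-1,
\]
with equality precisely when $\mathcal{A}_\pi\subset\bigcup_{x\in X}\mathcal{A}_x$.

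It then remains to verify that the upper bound $n-1$ is actually attained, so that it equals $\max_\pi\Sigma(\pi)$ and the median value is $(k-1)(n-1)$. This is immediate by taking $\pi=x_1$: then $\mathcal{A}_\pi=\mathcal{A}_{x_1}$ is contained in the union, so $\Sigma(x_1)=n-1$. Combining the three steps, $\pi$ minimizes $d_T(\cdot,X)$ — i.e.\ is a median of $X$ — if and only if $\Sigma(\pi)=n-1$, which by the equality case holds if and only if $\mathcal{A}_\pi\subset\bigcup_{x\in X}\mathcal{A}_x$.

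I do not anticipate a serious obstacle: the argument is essentially a counting identity together with the observation that the pairwise extremal distances force the adjacency sets $\mathcal{A}_{x_1},\dots,\mathcal{A}_{x_k}$ to be pairwise disjoint. The only point requiring a little care is the equality case — passing from $|\mathcal{A}_\pi\cap\bigcup_{x\in X}\mathcal{A}_x|=n-1$ to the inclusion $\mathcal{A}_\pi\subset\bigcup_{x\in X}\mathcal{A}_x$ uses that $|\mathcal{A}_\pi|=n-1$ for every permutation, so that a subset of $\mathcal{A}_\pi$ of full cardinality must be all of $\mathcal{A}_\pi$.
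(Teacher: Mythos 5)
Your proof is correct. Note, however, that the paper itself offers no proof of this proposition to compare against: it is quoted as a known result from Jamshidpey \emph{et al.}~\cite{jam14}, so your argument fills in what the paper leaves as a citation. Your route --- writing $d_T(\pi,X)=k(n-1)-\Sigma(\pi)$ with $\Sigma(\pi)=\sum_{i}|\mathcal{A}_\pi\cap\mathcal{A}_{x_i}|$, observing that the hypothesis $d(x_i,x_j)=n-1$ forces the sets $\mathcal{A}_{x_1},\dots,\mathcal{A}_{x_k}$ to be pairwise disjoint so that $\Sigma(\pi)=|\mathcal{A}_\pi\cap\bigcup_{x\in X}\mathcal{A}_x|\leq n-1$, and checking attainment at $\pi=x_1$ --- is sound, including the equality-case analysis. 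It is also exactly the degenerate case of the strategy the paper uses to prove its generalization, Theorem~\ref{thmalpha}: there the total distance of a median is likewise expanded adjacency-by-adjacency (via the quantities $\bar{\varepsilon}_{i_1,\dots,i_r}^X(\pi)$ and $\mathcal{B}_U^X$) and compared against $d_T^{(n)}(x_k,X)$ for a best input permutation, which is your comparison against $d_T(x_1,X)$ with all the intersection terms $\mathcal{B}_U^X$, $|U|\geq 2$, vanishing. The difference in payoff is that your disjointness argument yields both directions of the equivalence in one stroke (median value $=(k-1)(n-1)$, medians $=$ permutations drawing all adjacencies from the union), while the paper's heavier bookkeeping is needed precisely because, without the extremal-distance hypothesis, only a one-sided bound on $|\mathcal{A}_\pi\setminus\bigcup_{x\in X}\mathcal{A}_x|$ survives.
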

This is not true in general. For example, if $n=9$, $x=2 \ 7 \ 5 \ 6 \ 8 \ 3 \ 9 \ 4 \ 1$ and $\pi=6 \ 8 \ 9 \ 3 \ 4 \ 1 \ 2 \ 7 \ 5$, then every adjacency of $\pi$ is either an adjacency of $id=id^{(n)}$ or an adjacency of $x$, 
but $d^{(9)}(id,x)=7<d^{(9)}(id,\pi)+d^{(9)}(\pi,x)=8$. In fact, this happens because all common adjacencies of $id$ and $x$ must be adjacencies of $\pi$ in order to have  $\pi \in \overline{[id,x]}$ as stated in the next proposition.

\begin{proposition}[Jamshidpey \textit{et al.}\cite{jam14}]\label{dist}
Let $x,y \in S_n$. Then $z \in \overline{[x,y]}$ if and only if $\mathcal A_{x,y} \subset \mathcal A_z \subset \mathcal A_x \cup \mathcal A_y$.
\end{proposition}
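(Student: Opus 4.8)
The plan is to reduce the geodesic equation directly to a set-theoretic condition on $\mathcal A_z$ using the identity $d^{(n)}(u,v) = n-1 - |\mathcal A_{u,v}|$ together with the fact that every permutation class of length $n$ has exactly $n-1$ adjacencies, so that $|\mathcal A_z| = n-1$. Writing out the defining equation $d^{(n)}(x,y) = d^{(n)}(x,z) + d^{(n)}(z,y)$ of a geodesic point and substituting the distance formula, the free $(n-1)$ terms combine and the equation collapses to
\[
|\mathcal A_{x,z}| + |\mathcal A_{z,y}| = |\mathcal A_z| + |\mathcal A_{x,y}|.
\]
Thus the entire content of being a geodesic point is captured by this single cardinality identity, which I would take as the working form of the hypothesis.

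Next I would massage its left-hand side so that the two desired inclusions appear as sign-definite defects. Since $\mathcal A_{x,z}$ and $\mathcal A_{z,y}$ are both subsets of $\mathcal A_z$, inclusion-exclusion gives $|\mathcal A_{x,z}| + |\mathcal A_{z,y}| = |\mathcal A_{x,z} \cup \mathcal A_{z,y}| + |\mathcal A_{x,z} \cap \mathcal A_{z,y}|$. Using $\mathcal A_{x,z} \cup \mathcal A_{z,y} = \mathcal A_z \cap (\mathcal A_x \cup \mathcal A_y)$ and $\mathcal A_{x,z} \cap \mathcal A_{z,y} = \mathcal A_z \cap \mathcal A_{x,y}$, the identity rewrites as
\[
\bigl(|\mathcal A_z| - |\mathcal A_z \cap (\mathcal A_x \cup \mathcal A_y)|\bigr) + \bigl(|\mathcal A_{x,y}| - |\mathcal A_z \cap \mathcal A_{x,y}|\bigr) = 0,
\]
that is, $|\mathcal A_z \setminus (\mathcal A_x \cup \mathcal A_y)| + |\mathcal A_{x,y} \setminus \mathcal A_z| = 0$.

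The conclusion is then immediate and symmetric: a sum of two nonnegative integers vanishes precisely when both vanish, so the geodesic identity holds if and only if $\mathcal A_z \setminus (\mathcal A_x \cup \mathcal A_y) = \emptyset$ and $\mathcal A_{x,y} \setminus \mathcal A_z = \emptyset$, which is exactly $\mathcal A_{x,y} \subseteq \mathcal A_z \subseteq \mathcal A_x \cup \mathcal A_y$. Since every step in this chain is an equivalence, both directions of the biconditional are obtained at once, with no need to argue them separately.

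There is no deep obstacle here; the computation is routine once set up. The one place demanding care is verifying the two subset identities $\mathcal A_{x,z} \cup \mathcal A_{z,y} = \mathcal A_z \cap (\mathcal A_x \cup \mathcal A_y)$ and $\mathcal A_{x,z} \cap \mathcal A_{z,y} = \mathcal A_z \cap \mathcal A_{x,y}$, and recognizing that packaging the identity as a sum of nonnegative defects is precisely what makes the biconditional fall out automatically. I would also remark at the outset that adjacency sets, and hence everything above, are well-defined on $\hat S_n$, since a permutation and its reversal share the same set of adjacencies.
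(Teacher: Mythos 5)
The paper never proves this proposition: it is imported verbatim from Jamshidpey \emph{et al.}~\cite{jam14}, so there is no in-paper argument to compare yours against, and your proof must stand on its own. It does. The reduction of the geodesic equation to $|\mathcal A_{x,z}|+|\mathcal A_{z,y}| = |\mathcal A_z|+|\mathcal A_{x,y}|$ is a correct use of $d^{(n)}(u,v)=n-1-|\mathcal A_{u,v}|$ together with $|\mathcal A_z|=n-1$; the two set identities $\mathcal A_{x,z}\cup\mathcal A_{z,y}=\mathcal A_z\cap(\mathcal A_x\cup\mathcal A_y)$ and $\mathcal A_{x,z}\cap\mathcal A_{z,y}=\mathcal A_z\cap\mathcal A_{x,y}$ are immediate from distributivity once one writes $\mathcal A_{u,v}=\mathcal A_u\cap\mathcal A_v$; and recasting the cardinality identity as
\[
|\mathcal A_z\setminus(\mathcal A_x\cup\mathcal A_y)|+|\mathcal A_{x,y}\setminus\mathcal A_z|=0
\]
makes the equivalence with the two inclusions automatic, since a sum of nonnegative integers vanishes exactly when both terms do. Because every step in your chain is an equivalence, both directions of the biconditional come out at once, which is the cleanest possible packaging of this result. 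Your closing remark on well-definedness is also the right thing to flag: the paper defines $\overline{[x,y]}$ on $\hat S_n$ while the proposition quantifies over $S_n$, and the fact that a permutation and its reversal have identical adjacency sets is precisely what reconciles the two. The only cosmetic point is that the paper writes $\subset$ where it means $\subseteq$; your reading is the intended one.
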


Motivated by Proposition \ref{extreme}, for $X\subset S_n$, we let
\[
\LL_{n,0}(X):=\{\pi\in S_n: \mathcal{A}_\pi\subset \bigcup\limits_{x\in X} \mathcal{A}_x\},
\]
and, similarly to the notion of median inverse, we define
\begin{multline}
\LL_{n,k,0}^{-1}(\pi)=\{(x_1,...,x_k)\in S_n^k: \pi \in \LL_{n,0}(\{x_1,...,x_k\})\}=\\
\{(x_1,...,x_k)\in S_n^k: \mathcal{A}_\pi\subset \bigcup\limits_{x\in X} \mathcal{A}_x\}.
\end{multline}
In fact, Proposition \ref{extreme} implies that for $X\subset S_n$ with $d^{(n)}(x,y)=n-1$, for $x,y\in X$, we have $\mathcal{M}_n(X)=\LL_{n,0}(X)$. Letting
\[
V_{n,k}=\{(x_1,...,x_k)\in S_n^k: \ d^{(n)}(x_i,x_j)=n-1, for \ i\neq j\},
\]
we get, for any $\pi\in S_n$,
\[
\LL_{n,k,0}^{-1}(\pi)\cap V_{n,k}=\mathcal{M}_{n,k}^{-1}(\pi)\cap V_{n,k}.
\]
In other words, when we restrict ourselves to permutations on $V_{n,k}$, elements of $\mathcal{M}_{n,k}^{-1}(\pi)$ are identical to those of $\LL_{n,k,0}^{-1}(\pi)$. Let us continue this paper with computing $|\LL_{n,k,0}^{-1}(\pi)|$. To this end, we try to find an ordered $k$-tuple $(J_1,...,J_k)$ where every $J_i$, for $i=1,...,k$, is a segment set of $S_n$ such that
\[
\bigcup\limits_{i=1}^k J_i=\mathcal{A}_\pi,
\]
and then find all possible $k$-tuple $(x_1,...,x_k)\in S_n^k$ such that, for $i=1,...,k$, permutation $x_i$ contains exactly segment set $J_i$ from $\pi$, not anything more. We will see that this exactly gives us a way to count $|\LL_{n,k,0}^{-1}(\pi)|$. More precisely, for a segment set $I$ in a permutation $\pi$, we define $\HH_{\pi}^{(n)}(I)$ to be the set of all permutations having exactly segment set $I$ from $\pi$, that is
\[
\HH_{\pi}^{(n)}(I)=\{x\in S_n: \ \mathcal{A}_{\pi,x}=I\}.
\] 
Observe that, for permutations $x,y\in S_n$, $x\in \HH_{y}^{(n)}(I)$ if and only if $y\in \HH_{x}^{(n)}(I)$. Also, one can see that, for two non-identical segment sets of $\pi\in S_n$, namely $I\neq I'$, we have
\[
\HH_{\pi}^{(n)}(I)\cap \HH_{\pi}^{(n)}(I')=\emptyset,
\]
since if $x\in \HH_{\pi}^{(n)}(I)$ and $y\in \HH_{\pi}^{(n)}(I')$, then $\mathcal{A}_{\pi,x}=I\neq I'=\mathcal{A}_{\pi,y}$.

Let
\begin{equation}\label{index-partition}
\mathscr{P}_{k,0}^{(n)}(\pi)=\{(J_1,...,J_k)\in (\mathcal{I}^{(n)})^k: \ \bigcup\limits_{i=1}^k J_i=\mathcal{A}_\pi\}.
\end{equation}
If $\mathcal{J}=(J_1,...,J_k),\mathcal{J}'=(J_1',...,J_k')\in \mathscr{P}_{k,0}^{(n)}(\pi)$, such that $\mathcal{J}\neq \mathcal{J}'$, then
\[
(\HH_{\pi}^{(n)}(J_1)\times ...\times \HH_{\pi}^{(n)}(J_k)) \cap (\HH_{\pi}^{(n)}(J_1')\times ...\times \HH_{\pi}^{(n))}(J_k'))=\emptyset.
\]
Now, if $(x_1,...,x_k)\in \mathscr{L}_{n,k,0}^{-1}(\pi)$, then $\mathcal{A}_\pi \subset \cup_{i=1}^k \mathcal{A}_{x_i}$. Therefore, there exists $(J_1,...,J_k)\in \mathscr{P}_{k,0}^{(n)}(\pi)$ such that, for any $i=1,...,k$, $\mathcal{A}_{\pi,x_i}=J_i$ implying that $(x_1,...,x_k)\in \HH_{\pi}^{(n)}(J_1)\times ...\times \HH_{\pi}^{(n)}(J_k)$. On the other hand, if
\[
(x_1,...,x_k)\in \bigcup\limits_{(\tilde{J}_1,...,\tilde{J}_k)\in \mathscr{P}_{k,0}^{(n)}(\pi)} \HH_{\pi}^{(n)}(\tilde J_1)\times ...\times \HH_{\pi}^{(n)}(\tilde J_k),
\]
then there exists $(J_1,...,J_k)\in \mathscr{P}_{k,0}^{(n)}(\pi)$ such that $x_i\in \HH_{\pi}^{(n)}(J_i)$, for $i=1,...,k$, which means by itself $\mathcal{A}_{\pi,x_i}=J_i$. Thus
\[
\mathcal{A}_\pi=\bigcup\limits_{i=1}^k J_i=\bigcup\limits_{i=1}^k \mathcal{A}_{\pi,x_i} \subset \bigcup\limits_{i=1}^k \mathcal{A}_{x_i}.
\]
Hence,
\[
(x_1,...,x_k)\in \mathscr{L}_{n,k,0}^{-1}(\pi).
\]
In fact, we have proved the following proposition.
\begin{proposition}
Let $n,k$ be natural numbers, and $\pi$ be a permutation in $S_n$. Then
\[
\mathscr{L}_{n,k,0}^{-1}(\pi)=\bigcup\limits_{(\tilde{J}_1,...,\tilde{J}_k)\in \mathscr{P}_{k,0}^{(n)}(\pi)} \HH_{\pi}^{(n)}(\tilde J_1)\times ...\times \HH_{\pi}^{(n)}(\tilde J_k),
\]
and
\[
|\mathscr{L}_{n,k,0}^{-1}(\pi)|=\sum\limits_{(\tilde{J}_1,...,\tilde{J}_k)\in \mathscr{P}_{k,0}^{(n)}(\pi)} \prod\limits_{i=1}^k |\HH_{\pi}^{(n)}(\tilde J_i)|.
\]
\end{proposition}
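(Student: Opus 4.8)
The plan is to establish the displayed set identity by a double inclusion and then to deduce the counting formula by verifying that the union on the right-hand side is actually a \emph{disjoint} union, so that its cardinality is the sum of the cardinalities of the product sets.

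First I would prove the inclusion $\mathscr{L}_{n,k,0}^{-1}(\pi) \subseteq \bigcup \HH_\pi^{(n)}(\tilde J_1)\times\cdots\times\HH_\pi^{(n)}(\tilde J_k)$. Given $(x_1,\dots,x_k)$ in the left-hand set, the defining condition is $\mathcal{A}_\pi \subseteq \bigcup_{i} \mathcal{A}_{x_i}$. I would set $J_i := \mathcal{A}_{\pi,x_i}$ for each $i$; each $J_i$ is a segment set of $\pi$ because the intersection of two segment sets is again a segment set, and $\bigcup_i J_i = \mathcal{A}_\pi \cap \bigcup_i \mathcal{A}_{x_i} = \mathcal{A}_\pi$ by the hypothesis, so $(J_1,\dots,J_k)\in \mathscr{P}_{k,0}^{(n)}(\pi)$. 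Since $\mathcal{A}_{\pi,x_i}=J_i$ means exactly $x_i\in \HH_\pi^{(n)}(J_i)$, the tuple lies in the corresponding product. For the reverse inclusion I would start from a tuple lying in $\HH_\pi^{(n)}(J_1)\times\cdots\times\HH_\pi^{(n)}(J_k)$ with $(J_1,\dots,J_k)\in\mathscr{P}_{k,0}^{(n)}(\pi)$; then $\mathcal{A}_{\pi,x_i}=J_i\subseteq \mathcal{A}_{x_i}$, so $\mathcal{A}_\pi=\bigcup_i J_i\subseteq \bigcup_i \mathcal{A}_{x_i}$, which is precisely membership in $\mathscr{L}_{n,k,0}^{-1}(\pi)$. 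Together these give the first identity.

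For the cardinality formula I would observe that each $(x_1,\dots,x_k)$ determines its index tuple uniquely, namely $(\mathcal{A}_{\pi,x_1},\dots,\mathcal{A}_{\pi,x_k})$: if $\mathcal{J}\neq\mathcal{J}'$ in $\mathscr{P}_{k,0}^{(n)}(\pi)$ they differ in some coordinate $i$, and the disjointness $\HH_\pi^{(n)}(J_i)\cap\HH_\pi^{(n)}(J_i')=\emptyset$ for distinct segment sets of $\pi$ (already recorded before the statement) forces the two product sets to be disjoint. Hence the union is a partition of $\mathscr{L}_{n,k,0}^{-1}(\pi)$, and since the size of a Cartesian product is the product of the sizes of its factors, summing over the index tuples yields $\sum_{\mathcal{J}}\prod_{i=1}^k|\HH_\pi^{(n)}(J_i)|$. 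The argument is essentially bookkeeping, so I do not expect a genuine obstacle; the two points I would check most carefully are that $\mathcal{A}_{\pi,x_i}$ is a legitimate segment set (so that it is an admissible coordinate of an element of $\mathscr{P}_{k,0}^{(n)}(\pi)$) and that the disjointness claim really holds, since it is disjointness alone that guarantees we are counting a partition rather than overcounting.
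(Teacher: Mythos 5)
Your proof is correct and follows essentially the same route as the paper: identifying the index tuple $(\mathcal{A}_{\pi,x_1},\dots,\mathcal{A}_{\pi,x_k})$, checking both inclusions, and using the pairwise disjointness of the sets $\HH_\pi^{(n)}(I)$ for distinct segment sets $I$ to turn the union into a partition. You even make explicit two small points the paper leaves implicit (that each $\mathcal{A}_{\pi,x_i}$ is a legitimate segment set and that $\bigcup_i \mathcal{A}_{\pi,x_i}=\mathcal{A}_\pi$), which only strengthens the argument.
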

So, knowing the number of elements of $\HH_{\pi}^{(n)}(\tilde J_i)$ has an important role in counting the number of elements of $\mathscr{L}_{n,k,0}^{-1}(\pi)$.

What can we say when the permutations are chosen uniformly and independently at random, from $S_n$? One can see that the situation in this case is similar to that in the case of permutations with maximum distances from each others. The following classic result can shed light into this. 
\begin{proposition}
Let $\xi^{(n)}$ be a permutation chosen uniformly at random from $S_n$, and let $(a_n)_{n\in \N}$ be a sequence of real number tending to $\infty$
. Then 
\begin{itemize}
\item[i)] $\e[d^{(n)}(id^{(n)},\xi^{(n)})]=n-1 -\frac{2(n-1)}{n}, \ n\in \N$.
\item[ii)] $var(d^{(n)}(id^{(n)},\xi^{(n)}))= (2-\frac{2}{n})(-1+\frac{2}{n})+\frac{4(n-2)^2}{n(n-1)},  \ n\in \N$.
\item[iii)] For any $\varepsilon >0$,
\[
\p(n-1-d^{(n)}(id^{(n)},\xi^{(n)})\geq \varepsilon a_n)\rightarrow 0,
\]
as $n\rightarrow \infty$.
\end{itemize}
\end{proposition}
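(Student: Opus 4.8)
The plan is to reformulate everything in terms of the number of common adjacencies $N_n := n-1-d^{(n)}(id^{(n)},\xi^{(n)}) = |\mathcal{A}_{id^{(n)},\xi^{(n)}}|$, so that part iii) becomes the statement $\p(N_n \geq \varepsilon a_n) \to 0$, while parts i) and ii) are exactly the first two moments of $N_n$. The identity has adjacencies $\{1,2\},\{2,3\},\dots,\{n-1,n\}$, and for $i=1,\dots,n-1$ I would set $X_i := \mathbbm{1}[\{i,i+1\}\in\mathcal{A}_{\xi^{(n)}}]$, the indicator that $i$ and $i+1$ occupy consecutive positions of $\xi^{(n)}$ (in either order). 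Then $N_n=\sum_{i=1}^{n-1}X_i$, and everything reduces to computing the joint law of the $X_i$ by elementary block counting.

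First I would compute $\e[X_i]$. Treating $\{i,i+1\}$ as a single block with two internal orderings and arranging it together with the remaining $n-2$ symbols gives $2\,(n-1)!$ favourable permutations out of $n!$, so $\p(X_i=1)=2/n$. By linearity $\e[N_n]=(n-1)\cdot\tfrac{2}{n}=\tfrac{2(n-1)}{n}$, and since $d^{(n)}(id^{(n)},\xi^{(n)})=n-1-N_n$, this yields i) immediately.

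For ii) I would pass through the second moment, using $X_i^2=X_i$ to write $\e[N_n^2]=\e[N_n]+\sum_{i\neq j}\p(X_i=1,X_j=1)$. The joint probabilities split into two cases according to whether the index pairs overlap. If $|i-j|=1$, the three values $i,i+1,i+2$ must form a single consecutive run, forced to appear in the order $i,i+1,i+2$ or its reverse, giving $2\,(n-2)!$ permutations and $\p=2/(n(n-1))$; if $|i-j|\geq 2$ the two blocks are strongly disjoint, each carrying two orientations, giving $4\,(n-2)!$ permutations and $\p=4/(n(n-1))$. Counting the $2(n-2)$ ordered overlapping pairs and the $(n-2)(n-3)$ ordered disjoint pairs, the sum collapses to $\sum_{i\neq j}\p(X_i=1,X_j=1)=\tfrac{4(n-2)^2}{n(n-1)}$. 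Subtracting $\e[N_n]^2$ and rewriting $\tfrac{2(n-1)}{n}=2-\tfrac2n$ then produces exactly $(2-\tfrac2n)(-1+\tfrac2n)+\tfrac{4(n-2)^2}{n(n-1)}$, as claimed. This moment computation, in particular keeping the overlapping index pairs separate and tracking the orientation factors $2$ versus $4$, is the only genuinely delicate step, though it is purely combinatorial rather than a real obstacle.

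Finally, iii) follows without the variance at all: since $N_n\geq 0$ and $\e[N_n]=\tfrac{2(n-1)}{n}\leq 2$ is bounded uniformly in $n$, Markov's inequality gives $\p(N_n\geq\varepsilon a_n)\leq \e[N_n]/(\varepsilon a_n)\leq 2/(\varepsilon a_n)$, which tends to $0$ as $n\to\infty$ because $a_n\to\infty$. Chebyshev together with ii) would also work, but is superfluous here since the mean is already bounded.
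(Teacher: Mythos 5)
Your proof is correct, and both parts i) and ii) follow the same basic strategy as the paper: write $N_n=|\mathcal{A}_{id,\xi^{(n)}}|$ as a sum of $n-1$ indicators and compute first and second moments by direct counting. The difference is in the indexing: you index indicators by the adjacencies $\{i,i+1\}$ of $id$ and ask where they land in $\xi^{(n)}$, whereas the paper indexes by the positions of $\xi^{(n)}$ and asks which adjacency of $id$ can occupy them. These are dual decompositions yielding identical joint probabilities ($2/(n(n-1))$ for overlapping pairs, $4/(n(n-1))$ for disjoint ones), but your value-based block counting is cleaner: the paper's position-based count requires a case split between the interior adjacencies $\{u,u+1\}$, $2\leq u\leq n-2$, and the boundary adjacencies $\{1,2\}$, $\{n-1,n\}$, which your formulation avoids entirely. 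The more substantive divergence is in part iii): the paper deduces it from Chebyshev's inequality, using both the limiting mean and variance (so ii) is an ingredient of its argument), while you use Markov's inequality, which needs only $\e[N_n]\leq 2$; your route is more elementary and exposes the fact that iii) is logically independent of the variance computation. The trade-off is quantitative: Chebyshev gives a tail bound of order $a_n^{-2}$ (after centering), versus $a_n^{-1}$ from Markov, though nothing in the statement requires the sharper rate.
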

\begin{proof}
For simplicity, we drop the superscript of $id^{(n)}$ in the following computations. For $i=1,...,n-1$, let $\chi_i=1$ if the $i$-th adjacency of $\xi^{(n)}$, namely $\{\xi_i^{(n)},\xi_{i+1}^{(n)}\}$ is an adjacency of $id$, and let $\chi_i=0$, otherwise. For $i=1,...,n-1$, we have
\[
\e[\chi_i]=\frac{2(n-1)}{n(n-1)}=\frac{2}{n}.
\]
We can write
\[
\begin{array}{l}
\e[|\mathcal{A}_{id,\xi^{(n)}}|]=\sum\limits_{i=1}^{n-1} \e[\chi_i]=\frac{2(n-1)}{n}.
\end{array}
\]
Also,
\[
\begin{array}{l}
var(|\mathcal{A}_{id,\xi^{(n)}}|)=\sum\limits_{i=1}^{n-1} \e[\chi_i^2]+2\sum\limits_{i<j} \e[\chi_i\chi_j]-\e[|\mathcal{A}_{id,\xi^{(n)}}|]^2\\
=\e[|\mathcal{A}_{id,\xi^{(n)}}|](1-\e[|\mathcal{A}_{id,\xi^{(n)}}|])+2\sum\limits_{j-i>1} \e[\chi_i\chi_j]+2\sum\limits_{j-i=1} \e[\chi_i\chi_j].
\end{array}
\]
But, for $j-i>1$, we consider two cases. First, the $i$-th adjacency of $\xi^{(n)}$ can be one of the $n-3$ adjacencies of $id$, namely $\{u,u+1\}$ for $u=2,...,n-2$, each with two different directions, i.e. either $\xi_i^{(n)}=u$ and $\xi_{i+1}^{(n)}=u+1$, or  $\xi_{i+1}^{(n)}=u$ and $\xi_i^{(n)}=u+1$. In this case there are $n-4$ adjacencies of $id$ (exclude $\{u,u+1\}$ and both of its neighbouring adjacencies) that $j$-th adjacency of $\xi_i^{(n)}$ can be identical to, each with two directions. The second case, is when the $i$-th adjacency of $\xi^{(n)}$ is either $\{1,2\}$ or $\{n-1,n\}$, each with two possible directions, and in this case there are $n-3$ adjacencies of $id$ (exclude the one chosen for $i$-th adjacency of $\xi^{(n)}$ and its unique neighbouring adjacency in $id$), that can be picked for the $j$-th adjacency of $\xi^{(n)}$, again each with two directions. In summary, for $j-i>1$,
\begin{multline}
\e[\chi_i\chi_j]=\p(\chi_i=1,\chi_j=1)=\\
\frac{(2(n-3)\times 2(n-4))+((2\times 2) \times (2(n-3))}{n(n-1)(n-2)(n-3)}=\frac{4}{n(n-1)}.
\end{multline}
But the number of ways one can choose $i,j$ such that $j-i>1$ is $1+...+(n-3)=(n-2)(n-3)/2$. So
\[
2\sum\limits_{j-i>1} \e[\chi_i\chi_j]=\frac{4(n-2)(n-3)}{n(n-1)}.
\]
Similarly, for $j-i=1$, the $i$-th and $i+1$-st adjacencies of $\xi^{(n)}$ should be identical to two consecutive adjacencies of $id$. Considering the direction, this gives $2(n-2)$ possible ways in which $\chi_i=1$ and $\chi_{i+1}=1$. This implies that, for $j-i=1$,
\[
\e[\chi_i\chi_j]=\p(\chi_i=1,\chi_j=1)=\frac{2(n-2)}{n(n-1)(n-2)}=\frac{2}{n(n-1)}
\]
 As the number of ways one can choose $i,j$ such that $j-i=1$ is $n-2$, we can write
\[
2\sum\limits_{j-i=1} \e[\chi_i\chi_j]=\frac{4(n-2)}{n(n-1)}.
\]
Having all together, we get
\[
var(|\mathcal{A}_{id,\xi^{(n)}}|)= \frac{2(n-1)}{n} (1-\frac{2(n-1)}{n})+\frac{4(n-2)(n-3)}{n(n-1)}+\frac{4(n-2)}{n(n-1)},  \ n\in \N
\]
which concludes the second part of the proposition. Finally, letting $n\rightarrow \infty$, we have $\e[|\mathcal{A}_{id,\xi^{(n)}}|]\rightarrow 2$ and $var(|\mathcal{A}_{id,\xi^{(n)}}|)\rightarrow 2$. Therefore, for any arbitrary sequence $(a_n)_{n\in \N}$, satisfying the conditions mentioned in the statement of the proposition, Chebyshev's inequality implies convergence in probability of $|\mathcal{A}_{id,\xi^{(n)}}|/{a_n}$ to $0$, as $n\rightarrow \infty$, and therefore, $(iii)$ is proved.
\end{proof}
As we mentioned before, when the pairwise distances of permutations in $A\subset S_n$ takes its maximum value $n-1$, a permutation $\pi$ is a median of $A$ if and only if each of its adjacencies is an adjacency of exactly one of the permutations in $A$. This is not true in general. In fact, for a general $A\subset S_n$, a median need not to take all of its adjacencies from $\cup_{x\in A} \mathcal{A}_x$. Can we find an upper bound for the number of adjacencies of any median of $A$ which are not from $\cup_{x\in A} \mathcal{A}_x$? In other words, can we find a good uniform upper bound for $|\mathcal{A}_\pi\setminus \cup_{x\in A} \mathcal{A}_x|$ , for any median $\pi$ of $A$? The next theorem answers this question. Before stating this theorem we introduce some notations as follows.

Denote by $\mathcal{P}(S)$ the set of all subsets of a set or space $S$. Let $X=\{x_1, ...,x_k\}\subset S_n$ and let $\mathcal{B}_X^X=\mathcal{B}_{x_1,...,x_k}^X:=\mathcal{A}_{x_1,...,x_k}$. Then, for any $j=1...k$, let
\[
\mathcal{B}_{x_1,...,x_{j-1},x_{j+1},...x_k}^X:=\mathcal{A}_{x_1,...,x_{j-1},x_{j+1},...x_k} \setminus \mathcal{B}_{x_1,...,x_k}^X
\]
Continuing this, for any $U=\{x_{i_1},...,x_{i_r}\}\subset X$, we set
\[
\mathcal{B}_U^X=\mathcal{B}_{x_{i_1},...,x_{i_r}}^X:=\mathcal{A}_U \setminus (\bigcup\limits_{U \subsetneqq V} \mathcal{B}_V^X).
\]
Also, for a permutation $\pi$ and $r\leq k$, let
\[
\bar{\varepsilon}_{i_1,...,i_r}^X(\pi):=|\mathcal{A}_\pi\cap \mathcal{B}_{x_{i_1},...x_{i_r}}^X|.
\]
For $x\in S_n$ and a subset $X\subset S_n$, the bp total distance of $x$ to $X$ is denoted by
\[
d_T(x,X)=d_T^{(n)}(x,X):=\sum\limits_{y\in X} d(x,y).
\]
\begin{theorem}\label{thmalpha}
Let $X=\{x_1,...,x_k\}\subset S_n$, and let $\pi\in \mathcal{M}_n(X)$. We assume the labels of elements of $X$ are such that 
\[
d_T^{(n)}(x_k,X)=\min\limits_{i=1...k} d_T^{(n)}(x_i,X).
\]
Then
\begin{multline}
|\mathcal{A}_x \setminus (\bigcup\limits_{i=1}^k \mathcal{A}_{x_i})|\\
\leq \sum\limits_{r=2}^{k} (r-1) \{\sum\limits_{1\leq i_1<...<i_r\leq k} \bar{\varepsilon}_{i_1,...,i_r}^X(\pi)- \sum\limits_{1\leq i_1<...<i_{r-1}< k} |\mathcal{B}_{i_1,...,i_{r-1},k}^X|\}\\
\leq \sum\limits_{r=2}^{k-1} (r-1)\sum\limits_{1\leq i_1<...<i_r<k} |\mathcal{B}_{x_{i_1},...,x_{i_r}}^X|.
\end{multline}
In particular, for $k=3$, for any $x\in \mathcal{M}_n(X)$
\[
|\mathcal{A}_x\setminus \bigcup\limits_{i=1}^3 \mathcal{A}_{x_i}| \leq |\mathcal{B}_{x_1,x_2}^X|.
\]
\end{theorem}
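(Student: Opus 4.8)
The plan is to translate the median condition into a single counting inequality about adjacencies, and then read off both displayed bounds by a bookkeeping argument organized around the partition $\{\mathcal{B}_U^X : U \subseteq X\}$. The first point to record is that these blocks partition all adjacencies according to which permutations of $X$ contain them: a downward induction on $|U|$ shows that $\mathcal{B}_U^X$ consists exactly of the adjacencies $a$ with $\{i : a \in \mathcal{A}_{x_i}\} = U$. In particular $|\mathcal{A}_\pi \setminus \bigcup_{i} \mathcal{A}_{x_i}|$ is the number of adjacencies of $\pi$ lying in $\mathcal{B}_\emptyset^X$, and summing $\bar{\varepsilon}_U^X(\pi) = |\mathcal{A}_\pi \cap \mathcal{B}_U^X|$ over all $U$ recovers $|\mathcal{A}_\pi| = n-1$. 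This last identity is what will let me isolate the target quantity at the end.

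Next I would rewrite the breakpoint distances as adjacency counts. Since $d(x,y) = n-1-|\mathcal{A}_{x,y}|$, minimizing $d_T^{(n)}(\cdot, X)$ is the same as maximizing $\sum_{i=1}^k |\mathcal{A}_{\cdot, x_i}|$, so the median property $\pi \in \mathcal{M}_n(X)$ gives $d_T^{(n)}(\pi, X) \le d_T^{(n)}(x_k, X)$, equivalently
\[
\sum_{i=1}^k |\mathcal{A}_{\pi, x_i}| \ge \sum_{i=1}^k |\mathcal{A}_{x_k, x_i}|.
\]
Only this single inequality is used; the hypothesis that $x_k$ realizes $\min_i d_T^{(n)}(x_i,X)$ serves only to make the resulting bound as sharp as possible. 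Both sides then expand through the partition. On the left, each adjacency $a$ of $\pi$ is counted once for every $x_i$ containing it, so $\sum_i |\mathcal{A}_{\pi,x_i}| = \sum_U |U|\,\bar{\varepsilon}_U^X(\pi)$. On the right, the adjacencies of $x_k$ are precisely $\bigsqcup_{U \ni k} \mathcal{B}_U^X$, and an adjacency with support $U$ is shared with $|U|-1$ of the other $x_i$, so $\sum_i |\mathcal{A}_{x_k,x_i}| = (n-1) + \sum_{U \ni k}(|U|-1)|\mathcal{B}_U^X|$, the leading $n-1$ coming from the term $i=k$.

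Then I would split the factor $|U|$ on the left as $1+(|U|-1)$ and substitute $\sum_U \bar{\varepsilon}_U^X(\pi) = (n-1) - |\mathcal{A}_\pi \setminus \bigcup_i \mathcal{A}_{x_i}|$. The two copies of $n-1$ cancel, and after rearranging, the target quantity is bounded by $\sum_{r\ge 2}(r-1)\bigl(\sum_{|U|=r}\bar{\varepsilon}_U^X(\pi) - \sum_{|U|=r,\,k\in U}|\mathcal{B}_U^X|\bigr)$, which is exactly the first displayed inequality once each $U$ is written as an increasing index tuple. For the second inequality I would invoke the elementary bound $\bar{\varepsilon}_U^X(\pi) \le |\mathcal{B}_U^X|$: splitting the inner sum into the terms with $k\in U$ and those with $k\notin U$, the $k\in U$ contributions are killed by the subtracted term (their difference is $\le 0$), leaving only $\sum_{|U|=r,\,k\notin U}\bar{\varepsilon}_U^X(\pi) \le \sum_{|U|=r,\,k\notin U}|\mathcal{B}_U^X|$; since the unique $k$-element subset contains $k$, no $r=k$ term survives and the sum stops at $r=k-1$. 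The case $k=3$ is then immediate, the single surviving term being $r=2$, $U=\{1,2\}$, which gives $|\mathcal{B}_{x_1,x_2}^X|$.

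The routine parts are the distance-to-adjacency translation and the index bookkeeping; the one place demanding care is the right-hand expansion $\sum_i |\mathcal{A}_{x_k,x_i}| = (n-1)+\sum_{U\ni k}(|U|-1)|\mathcal{B}_U^X|$, that is, correctly identifying the adjacencies of $x_k$ with the blocks $\mathcal{B}_U^X$ for $U \ni k$ and weighting each by $|U|-1$. That identification, together with remembering to single out the $i=k$ term, is precisely what makes the two $n-1$ terms cancel and lets the bound collapse to the stated form.
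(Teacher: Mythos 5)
Your proof is correct and takes essentially the same route as the paper's: both rest on partitioning adjacencies into the blocks $\mathcal{B}_U^X$, the counting identity $\eta+\sum_{U\neq\emptyset}\bar{\varepsilon}_U^X(\pi)=n-1$, the single median inequality $d_T^{(n)}(\pi,X)\leq d_T^{(n)}(x_k,X)$ expanded over these blocks, and the final bound $\bar{\varepsilon}_U^X(\pi)\leq|\mathcal{B}_U^X|$ to kill the $k\in U$ terms. The only differences are presentational---you phrase the comparison as maximizing shared adjacencies rather than minimizing total distance, and you spell out the partition facts (that $\mathcal{B}_U^X$ is the set of adjacencies with support exactly $U$, and $\mathcal{A}_{x_k}=\bigsqcup_{U\ni k}\mathcal{B}_U^X$) that the paper uses implicitly.
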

\begin{proof}
Let $\eta=|\mathcal{A}_\pi \setminus \cup_{x\in X} \mathcal{A}_x|$. Then
\[
\eta+\sum\limits_{r=1}^k \sum\limits_{1\leq i_1<...<i_r\leq k} \bar{\varepsilon}_{i_1,...,i_r}^X(\pi)=n-1
\]
As $\pi$ is a median of $X$, we have
\[
\begin{array}{l}
d_T^{(n)}(\pi,X)=k(n-1)- \sum\limits_{r=1}^k [r \sum\limits_{1\leq i_1<...<i_r\leq k} \bar{\varepsilon}_{i_1,...,i_r}^X(\pi)]\\
=(k-1)(n-1)+\eta-\sum\limits_{r=2}^k [(r-1) \sum\limits_{1\leq i_1<...<i_r\leq k} \bar{\varepsilon}_{i_1,...,i_r}^X(\pi)]\\
\leq d_T^{(n)}(x_k,X)=(k-1)(n-1)-(\sum\limits_{1\leq i_1< k} |\mathcal{B}_{i_1,k}^X|+2\sum\limits_{1\leq i_1<i_2< k} |\mathcal{B}_{i_1,i_2,k}^X|+...+\\
(k-2)\sum\limits_{1\leq i_1<...<i_{k-2}< k} |\mathcal{B}_{i_1,...,i_{k-2},k}^X|+(k-1)|\mathcal{B}_{1,...,k}^X|)
\end{array}
\]
Hence,
\begin{multline}
\eta \leq\\
(\sum\limits_{r=2}^{k} (r-1)\sum\limits_{1\leq i_1<...<i_r\leq k} \bar{\varepsilon}_{i_1,...,i_r}^X(\pi))-(\sum\limits_{r=2}^{k} (r-1)\sum\limits_{1\leq i_1<...<i_{r-1}< k} |\mathcal{B}_{i_1,...,i_{r-1},k}^X|)\\
\leq \sum\limits_{r=2}^{k-1} (r-1)\sum\limits_{1\leq i_1<...<i_r<k} |\mathcal{B}_{x_{i_1},...,x_{i_r}}^X|,
\end{multline}
where the last inequality holds because $\bar{\varepsilon}_{i_1,...,i_r}^X(\pi) \leq |\mathcal{B}_{x_{i_1},...,x_{i_r}}^X|$, for any $r\leq k$ and $1\leq i_1<...<i_r\leq k$.
\end{proof}
For $X=\{x_1,...,x_k\}$, let $\sigma$ be an arbitrary permutation on $\{1,..,k\}$ such that
\[
d_T^{(n)}(x_{\sigma(k)},X)=\min\limits_{i=1...k} d_T^{(n)}(x_i,X),
\]
Consider the relabelling $x_i^\sigma:=x_{\sigma(i)}$, for $i=1,...,k$, for elements of $X$.
So we can denote
\[
\mathcal{O}_n(X):=\sum\limits_{r=2}^{k-1} (r-1)\sum\limits_{1\leq i_1<...<i_r<k} |\mathcal{B}_{x_{i_1}^\sigma,...,x_{i_r}^\sigma}^X|.
\]
\begin{remark}\label{remark:upperb}
Of course for $X=\{x_1,...,x_k\}\subset S_n$ with maximum distance $d^{(n)}(x_i,x_j)=n-1$, for $i\neq j$, $\mathcal{O}_n(X)=0$ and therefore, as we already saw in Proposition \ref{extreme}, every median picks its adjacencies from the union of adjacencies of $k$ permutations. But the opposite is not true, namely, there exist sets of permutations $X$ such that $\mathcal{O}_n(X)=0$, but adjacency sets of permutations have intersections with each others. So Theorem \ref{thmalpha} gives a stronger statement when permutations of $X$ are not located at the maximum distance of each others but $\mathcal{O}_n(X)=0$ and in this case we still have the same property. For example, consider three permutations
\[
id=id^{(6)}= 1 \ 2 \ 3 \ 4 \ 5 \ 6 ,
\]
\[
x= 4 \ 6 \ 5 \ 1 \ 3 \ 2,
\]
and
\[
y=4 \ 2 \ 6 \ 5 \ 1 \ 3,
\]
and let $X=\{id,x,y\}$. We have $\mathcal{A}_{id,x}=\{\{2,3\},\{5,6\}\}$, $\mathcal{A}_{id,y}=\{\{5,6\}\}$, $\mathcal{A}_{x,y}=\{\{5,6\},\{1,5\},\{1,3\}\}$, and $\mathcal{A}_{id,x,y}=\{\{5,6\}\}$. Then $d_T^{(n)}(id,X)=7$, $d_T^{(n)}(x,X)=5$, and $d_T^{(n)}(y,X)=6$, and thus
\[
\mathcal{O}_n(X)=|\mathcal{A}_{id,y}\setminus \mathcal{A}_{id,x,y}|=0.
\]
\end{remark}

Motivated by Theorem \ref{thmalpha} and rewriting
\[
\LL_{n,0}(X):=\{\pi\in S_n: |\mathcal{A}_\pi\setminus \bigcup\limits_{x\in X} \mathcal{A}_x|\leq 0\},
\]
for $c\geq 0$ we define
\[
\LL_{n,c}(X):=\{\pi\in S_n: |\mathcal{A}_\pi\setminus \bigcup\limits_{x\in X} \mathcal{A}_x|\leq c\},
\]
and also let
\begin{multline}
\LL_{n,k,c}^{-1}(\pi)=\{(x_1,...,x_k)\in S_n^k: \pi \in \LL_{n,c}(\{x_1,...,x_k\})\}=\\
\{(x_1,...,x_k)\in S_n^k: |\mathcal{A}_\pi\setminus \bigcup\limits_{x\in X} \mathcal{A}_x|\leq c\}.
\end{multline}
Note that the left-invariance property of the breakpoint distance implies
\[
\A_{x,y}=\A_{\pi x,\pi y},
\]
for $\pi,x,y\in S_n$. Therefore, for any $\pi,x\in S_n$ and $X=\{x_1,...,x_k\}\subset S_n$, 
\[
d_T^{(n)}(x,X)=d_T^{(n)}(\pi x,\pi X),
\]
where $\pi X=\{\pi x_1,...,\pi x_k\}$. This yields
\[
\pi \mathcal{M}_n(x)=\mathcal{M}_n(\pi x),
\]
and therefore, for any $x,y\in S_n$.
\[
|\mathcal{M}_{n,k}^{-1}(x)|=|\mathcal{M}_{n,k}^{-1}(y)|.
\]
Also, denoting the bp median value of $X$ by $\mu_n(X)$, we can write
\[
\mu_n(X)=\mu_n(\pi X).
\]
On the other hand, for $\pi\in S_n$ and $k$-tuple $(x_1,...,x_k)\in S_n^k$, denote
\[
\pi(x_1,...,x_k)=(\pi x_1,...,\pi x_k).
\]
Similarly to the median inverse case, write
\[
\begin{array}{ll}
\mathscr{L}_{n,k,c}^{-1}(\pi x)&=\{(\pi x_1,...,\pi x_k): \ |\A_{\pi x}\setminus \cup_{i=1}^k \A_{\pi x_i}|\}\\
                               &=\{\pi(x_1,...,x_k): \ |\A_{x}\setminus \cup_{i=1}^k \A_{x_i}|\}\\
                               &=\pi \mathscr{L}_{n,k,c}^{-1}(x),
\end{array}
\]
and thus, for any $x,y\in S_n$
\[
|\mathscr{L}_{n,k,c}^{-1}(x)|=|\mathscr{L}_{n,k,c}^{-1}(y)|,
\]
since $\pi(x_1,...,x_k)\mapsto (\pi x_1,...,\pi x_k)$ is a bijection.

Let $\xi_1^{(n)},...,\xi_k^{(n)}$ be $k$ permutations chosen uniformly and independently at random from $S_n$. Intuitively, Proposition \ref{extreme} implies that for any sequence $(a_n)_{n\in\N}$, for which $a_n\rightarrow \infty$ and $a_n/n\rightarrow 0$, as $n\rightarrow \infty$, and any sequence of permutations $(\pi_n)_{n\in \N}$, with $\pi_n\in S_n$, the probability that 
\[
(\xi_1^{(n)},...,\xi_k^{(n)})\in \LL_{n,k,a_n}^{-1}(\pi_n),
\] 
somehow gives an upper bound for the probability that 
\[
(\xi_1^{(n)},...,\xi_k^{(n)})\in \mathcal{M}_{n,k}^{-1}(\pi_n).
\]
\begin{theorem}
Let $(a_n)_{n\in \N}$ be a sequence of real numbers diverging to $\infty$, such that $a_n/n\rightarrow 0$, as $n\rightarrow \infty$. Let $\xi_1^{(n)},...,\xi_k^{(n)}$ be $k$ permutations chosen uniformly and independently at random from $S_n$. Then for any arbitrary sequence of permutations $(\pi_n)_{n\in \N}$, with $\pi_n\in S_n$,
\begin{equation}\label{upperbound}
\limsup_n \p((\xi_1^{(n)},...,\xi_k^{(n)})\in \mathcal{M}_{n,k}^{-1}(\pi_n)) \leq \limsup_n \p((\xi_1^{(n)},...,\xi_k^{(n)})\in \LL_{n,k,a_n}^{-1}(\pi_n)).
\end{equation}
\begin{proof}
Let $X=\{x_1,...,x_k\}\subset S_n$, and consider sets $U_2,...,U_k\subset X$, such that for a fixed $i_1\neq i_2$ in $\{1,...,k\}$,
\[
U_2=\{x_{i_1},x_{i_2}\} \subsetneqq U_3 \subsetneqq ... \subsetneqq U_k=X,
\]
that is for any $l=2,...,k-1$, $|\mathcal{A}_{x_{l+1}}\setminus \mathcal{A}_{x_l}|=1$. Then by definition
\[
\bigcup_{i=1}^k \mathcal{B}_{U_i}^X=\mathcal{A}_{x_{i_1},x_{i_2}}.
\]
This yields
\[
|\mathcal{O}_n(X)|\leq \sum\limits_{i<j<k}|\mathcal{A}_{x_i,x_j}|.
\]
So if $|\mathcal{O}_n(X)|\geq c$, for $c\geq 0$, then for at least one pair of points in $X$, namely $x,y$,
\[
|\mathcal{A}_{x,y}|\geq \frac{c}{{k-1 \choose 2}}.
\]
Setting
\begin{small}
\[
\tau=\min\{i\in \{1,...,k\}: \ d_T^{(n)}(\xi_i^{(n)},\{\xi_1^{(n)},...,\xi_k^{(n)}\})\leq d_T^{(n)}(\xi_j^{(n)},\{\xi_1^{(n)},...,\xi_k^{(n)}\}), \ \ for \ j=1,...,k\}, 
\]
\end{small}
the last inequality implies that
\[
\begin{array}{l}
\p(\mathcal{O}_n(\{\xi_1^{(n)},...,\xi_k^{(n)}\})\geq a_n)\leq \\ \\
\p(\bigcup\limits_{{\scriptsize \begin{array}{c}i<j \\ i,j\neq \tau \end{array}}} \{|\mathcal{A}_{\xi_i^{(n)},\xi_j^{(n)}}|\geq \frac{a_n}{{k-1 \choose 2}}\})\leq \\
\sum\limits_{{\scriptsize \begin{array}{c}i<j \\ i,j\neq \tau \end{array}}} \p(|\mathcal{A}_{\xi_i^{(n)},\xi_j^{(n)}}|\geq \frac{a_n}{{k-1 \choose 2}})\rightarrow 0,
\end{array}
\]
as $n\rightarrow \infty$, by Proposition \ref{extreme}. Thus, (\ref{upperbound}) follows from
\[
\p((\xi_1^{(n)},...,\xi_k^{(n)})\in \mathcal{M}_{n,k}^{-1}(\pi_n)\setminus \LL_{n,k,a_n}^{-1}(\pi_n))\rightarrow 0,
\]
as $n\rightarrow \infty$, and
\[
\begin{array}{l}
\p(\xi_1^{(n)},...,\xi_k^{(n)})\in \mathcal{M}_{n,k}^{-1}(\pi_n))\leq \\
\p((\xi_1^{(n)},...,\xi_k^{(n)})\in \mathcal{M}_{n,k}^{-1}(\pi_n)\setminus \LL_{n,k,a_n}^{-1}(\pi_n))+ \p((\xi_1^{(n)},...,\xi_k^{(n)})\in \LL_{n,k,a_n}^{-1}(\pi_n)).
\end{array}
\]
\end{proof}
\end{theorem}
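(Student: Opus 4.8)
The plan is to show that the difference set $\mathcal{M}_{n,k}^{-1}(\pi_n)\setminus \LL_{n,k,a_n}^{-1}(\pi_n)$ becomes negligible in probability, so that the two events on the two sides of \eqref{upperbound} coincide asymptotically up to a vanishing error. The bridge between the two sets is Theorem \ref{thmalpha}. If a $k$-tuple $(x_1,\dots,x_k)$ lies in $\mathcal{M}_{n,k}^{-1}(\pi_n)$, then $\pi_n$ is a median of $X=\{x_1,\dots,x_k\}$, and the theorem bounds the number of ``extra'' adjacencies by $|\mathcal{A}_{\pi_n}\setminus\bigcup_i\mathcal{A}_{x_i}|\le\mathcal{O}_n(X)$. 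Consequently, whenever $\mathcal{O}_n(X)\le a_n$, such a tuple automatically satisfies the defining inequality of $\LL_{n,k,a_n}^{-1}(\pi_n)$. This yields the key containment
\[
\mathcal{M}_{n,k}^{-1}(\pi_n)\setminus \LL_{n,k,a_n}^{-1}(\pi_n)\subseteq \{(x_1,\dots,x_k)\in S_n^k:\ \mathcal{O}_n(\{x_1,\dots,x_k\})> a_n\}.
\]

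With this in hand, I would split by subadditivity
\[
\p\big((\xi_1^{(n)},\dots,\xi_k^{(n)})\in\mathcal{M}_{n,k}^{-1}(\pi_n)\big)\le \p\big((\xi_1^{(n)},\dots,\xi_k^{(n)})\in\mathcal{M}_{n,k}^{-1}(\pi_n)\setminus\LL_{n,k,a_n}^{-1}(\pi_n)\big)+\p\big((\xi_1^{(n)},\dots,\xi_k^{(n)})\in\LL_{n,k,a_n}^{-1}(\pi_n)\big),
\]
and take $\limsup_n$ of both sides. Since the second term is exactly the right-hand side of \eqref{upperbound} and both $\limsup$s are finite (bounded by $1$), it suffices to prove that the first term tends to $0$. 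By the containment above this reduces to the single estimate $\p\big(\mathcal{O}_n(\{\xi_1^{(n)},\dots,\xi_k^{(n)}\})> a_n\big)\to 0$.

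To control $\mathcal{O}_n(X)$, I would first reduce it to pairwise common-adjacency counts, using that the sets $\mathcal{B}_U^X$ are pairwise disjoint and that for any pair $\{i,j\}$ one has the disjoint decomposition $\mathcal{A}_{x_i,x_j}=\bigcup_{U\supseteq\{i,j\}}\mathcal{B}_U^X$. Combining this with the elementary bound $r-1\le\binom{r}{2}$ for $r\ge 2$ (each $\mathcal{B}_U^X$ with $|U|=r$ being counted once for each of its $\binom{r}{2}$ pairs), one obtains
\[
\mathcal{O}_n(X)=\sum_{r=2}^{k-1}(r-1)\sum_{1\le i_1<\dots<i_r<k}|\mathcal{B}_{x_{i_1},\dots,x_{i_r}}^X|\ \le\ \sum_{1\le i<j<k}|\mathcal{A}_{x_i,x_j}|,
\]
where the index $k$ is the relabelled permutation of minimal total distance. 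Hence $\mathcal{O}_n(X)>a_n$ forces at least one pairwise count to satisfy $|\mathcal{A}_{x_i,x_j}|>a_n/\binom{k-1}{2}$. Passing to the random tuple and letting $\tau$ denote the (random) index of minimal total distance, a union bound over the at most $\binom{k}{2}$ pairs with $i,j\ne\tau$ gives $\p\big(\mathcal{O}_n>a_n\big)\le\sum_{i<j}\p\big(|\mathcal{A}_{\xi_i^{(n)},\xi_j^{(n)}}|> a_n/\binom{k-1}{2}\big)$. Each summand is then estimated by the concentration already recorded: by left-invariance $\mathcal{A}_{\xi_i^{(n)},\xi_j^{(n)}}=\mathcal{A}_{id,(\xi_i^{(n)})^{-1}\xi_j^{(n)}}$ with $(\xi_i^{(n)})^{-1}\xi_j^{(n)}$ again uniform, so $|\mathcal{A}_{\xi_i^{(n)},\xi_j^{(n)}}|$ has the law of $|\mathcal{A}_{id,\xi^{(n)}}|$, whose mean and variance both converge to $2$; since $a_n\to\infty$ and $\binom{k-1}{2}$ is fixed, Chebyshev's inequality (exactly as in part (iii) of the earlier proposition) drives each of the finitely many summands to $0$.

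The step I expect to be most delicate is the combinatorial reduction $\mathcal{O}_n(X)\le\sum_{i<j<k}|\mathcal{A}_{x_i,x_j}|$: one must track which subsets $U$ carry the weight $(r-1)$, confirm that the minimal-total-distance permutation is correctly excluded from the pairs so that the relabelling of Theorem \ref{thmalpha} is respected, and verify that the disjointness of the $\mathcal{B}_U^X$ really prevents any over-counting when passing from weighted subset sums to pairwise counts. A secondary point of care is that $\tau$ is random, which I would handle simply by enlarging the union bound to run over all pairs; once this bookkeeping is settled, the tail estimate is routine given the concentration of $|\mathcal{A}_{id,\xi^{(n)}}|$.
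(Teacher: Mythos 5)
Your proposal is correct, and its overall skeleton is the same as the paper's: the containment $\mathcal{M}_{n,k}^{-1}(\pi_n)\setminus \LL_{n,k,a_n}^{-1}(\pi_n)\subseteq\{\mathcal{O}_n>a_n\}$ obtained from Theorem \ref{thmalpha}, the subadditivity split followed by a $\limsup$, and the reduction to showing $\p\bigl(\mathcal{O}_n(\{\xi_1^{(n)},\dots,\xi_k^{(n)}\})>a_n\bigr)\to 0$ via a union bound and concentration of $|\A_{\xi_i^{(n)},\xi_j^{(n)}}|$. Where you genuinely diverge --- to your advantage --- is in the combinatorial step $\mathcal{O}_n(X)\le\sum_{i<j<k}|\A_{x_i,x_j}|$. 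The paper derives it from a single increasing chain $U_2\subsetneqq U_3\subsetneqq\dots\subsetneqq U_k$ together with the claim $\bigcup_{i}\mathcal{B}_{U_i}^X=\A_{x_{i_1},x_{i_2}}$; as written this identity is dubious, since a chain produces only $k-1$ of the sets $\mathcal{B}_V^X$ with $V\supseteq\{x_{i_1},x_{i_2}\}$, whereas the correct disjoint decomposition runs over \emph{all} such supersets $V$. Your argument uses exactly that full decomposition $\A_{x_i,x_j}=\bigcup_{U\supseteq\{x_i,x_j\}}\mathcal{B}_U^X$ plus the double count $r-1\le\binom{r}{2}$ (each $U$ with $|U|=r$ not containing the minimizing element is hit by $\binom{r}{2}$ pairs $i<j<k$), which is rigorous, transparent, and in effect repairs this step of the paper. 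Two further small differences also favor your write-up: you handle the randomness of the minimizing index $\tau$ by enlarging the union bound to all $\binom{k}{2}$ pairs, which costs only a constant factor and is harmless in the limit, whereas the paper keeps the constraint $i,j\ne\tau$ inside the probability with $\tau$ random, a delicate point left unjustified; and for the tail estimate you invoke the correct tool --- left-invariance so that $|\A_{\xi_i^{(n)},\xi_j^{(n)}}|$ has the law of $|\A_{id,\xi^{(n)}}|$, then Chebyshev as in part (iii) of the paper's unlabeled concentration proposition --- where the paper's text at that point mistakenly cites Proposition \ref{extreme}.
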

\begin{remark}
Note that the condition $a_n/n\rightarrow 0$, as $n\rightarrow 0$, is not necessary and the statement of the theorem remains true for a general diverging sequence $(a_n)_{n\geq 0}$. But in the absence of the mentioned condition the inequality will not be so helpful. In fact, the smaller the order of $a_n$, the better the upper bound.
\end{remark}
To find
\[
\p((\xi_1^{(n)},...,\xi_k^{(n)})\in \LL_{n,k,a_n}^{-1}(\pi_n))=\frac{|\LL_{n,k,a_n}^{-1}(\pi_n)|}{(n!)^k},
\]
we count the number of elements in $\LL_{n,k,c}^{-1}(\pi)$ in terms of $\mathcal{H}_\pi^{(n)}(J_i)$, for $c>0$, in the next theorem. First, for $c>0$, let
\[
\mathscr{P}_{k,c}^{(n)}(\pi)=\{(J_1,...,J_k)\in (\mathcal{I}^{(n)})^k: \ |\mathcal{A}_\pi\setminus \bigcup\limits_{i=1}^k J_i|\leq c\}.
\]
Letting $c=0$, this definition is identical with the one in (\ref{index-partition}).
Similarly to the case of $c=0$, let $\mathcal{J}=(J_1,...,J_k),\mathcal{J}'=(J_1',...,J_k')\in \mathscr{P}_{k,c}^{(n)}(\pi)$, such that $\mathcal{J}\neq \mathcal{J}'$, then
\[
(\HH_{\pi}^{(n)}(J_1)\times ...\times \HH_{\pi}^{(n)}(J_k)) \cap (\HH_{\pi}^{(n)}(J_1')\times ...\times \HH_{\pi}^{(n)}(J_k'))=\emptyset.
\]
Now, if $(x_1,...,x_k)\in \mathscr{L}_{n,k,c}^{-1}(\pi)$, then there exist at most $c$ adjacencies of $\pi$ that are not in $\cup_{i=1}^k \mathcal{A}_{x_i}$. Therefore, there exists $(J_1,...,J_k)\in \mathscr{P}_{k,c}^{(n)}(\pi)$ such that, for any $i=1,...,k$, $\mathcal{A}_{\pi,x_i}=J_i$ implying that $(x_1,...,x_k)\in \HH_{\pi}^{(n)}(J_1)\times ...\times \HH_{\pi}^{(n)}(J_k)$. On the other hand, if
\[
(x_1,...,x_k)\in \bigcup\limits_{(\tilde{J}_1,...,\tilde{J}_k)\in \mathscr{P}_{k,c}^{(n)}(\pi)} \HH_{\pi}^{(n)}(\tilde J_1)\times ...\times \HH_{\pi}^{(n)}(\tilde J_k),
\]
then there exists $(J_1,...,J_k)\in \mathscr{P}_{k,c}^{(n)}(\pi)$ such that $x_i\in \HH_{\pi}^{(n)}(J_i)$, for $i=1,...,k$, and so $\mathcal{A}_{\pi,x_i}=J_i$. Thus
\[
\begin{array}{l}
\mathcal{A}_\pi \setminus \bigcup\limits_{i=1}^k J_i=\mathcal{A}_\pi \setminus  \bigcup\limits_{i=1}^k \mathcal{A}_{\pi,x_i} =\mathcal{A}_\pi \setminus \bigcup\limits_{i=1}^k \mathcal{A}_{x_i}.
\end{array}
\]
Therefore, $|\mathcal{A}_\pi \setminus \bigcup\limits_{i=1}^k \mathcal{A}_{x_i}|\leq c$, and thus, $(x_1,...,x_k)\in \mathscr{L}_{n,k,c}^{-1}(\pi)$.
\begin{theorem}
Let $n,k$ be natural numbers, and let $c\geq 0$ be a real number. Also let $\pi$ be a permutation in $S_n$. Then
\[
\mathscr{L}_{n,k,c}^{-1}(\pi)=\bigcup\limits_{(\tilde{J}_1,...,\tilde{J}_k)\in \mathscr{P}_{k,c}^{(n)}(\pi)} \HH_{\pi}^{(n)}(\tilde J_1)\times ...\times \HH_{\pi}^{(n)}(\tilde J_k),
\]
and
\[
|\mathscr{L}_{n,k,c}^{-1}(\pi)|=\sum\limits_{(\tilde{J}_1,...,\tilde{J}_k)\in \mathscr{P}_{k,c}^{(n)}(\pi)} \prod\limits_{i=1}^k |\HH_{\pi}^{(n)}(\tilde J_i)|.
\]
\end{theorem}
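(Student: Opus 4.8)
The plan is to establish the set identity first, by proving the two inclusions separately, and then to deduce the cardinality formula from it by exploiting the disjointness of the sets $\HH_\pi^{(n)}(I)$. Much of the set-theoretic argument has in fact already been carried out in the discussion preceding the statement, so the proof will largely consist of collecting those observations together with the requisite combinatorial bookkeeping; the only genuine novelty over the $c=0$ proposition is that the equality $\bigcup_i J_i=\mathcal{A}_\pi$ is relaxed to the inequality $|\mathcal{A}_\pi\setminus\bigcup_i J_i|\le c$.

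For the forward inclusion I would start with an arbitrary $(x_1,\dots,x_k)\in \mathscr{L}_{n,k,c}^{-1}(\pi)$ and set $J_i:=\mathcal{A}_{\pi,x_i}$ for each $i$. Since $J_i\subset \mathcal{A}_\pi$, each $J_i$ is automatically a segment set of $S_n$, i.e.\ $J_i\in \mathcal{I}^{(n)}$. The key identity is $\bigcup_{i=1}^k J_i=\mathcal{A}_\pi\cap\bigcup_{i=1}^k\mathcal{A}_{x_i}$, which gives $\mathcal{A}_\pi\setminus\bigcup_i J_i=\mathcal{A}_\pi\setminus\bigcup_i\mathcal{A}_{x_i}$; the defining inequality of $\mathscr{L}_{n,k,c}^{-1}(\pi)$ then places $(J_1,\dots,J_k)$ in $\mathscr{P}_{k,c}^{(n)}(\pi)$, while $x_i\in \HH_\pi^{(n)}(J_i)$ by construction. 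For the reverse inclusion I would run this reasoning backwards: given a tuple in $\HH_\pi^{(n)}(J_1)\times\cdots\times\HH_\pi^{(n)}(J_k)$ with $(J_1,\dots,J_k)\in\mathscr{P}_{k,c}^{(n)}(\pi)$, the relation $\mathcal{A}_{\pi,x_i}=J_i$ lets me replace each $J_i$ by $\mathcal{A}_{\pi,x_i}$ in the membership condition for $\mathscr{P}_{k,c}^{(n)}(\pi)$, recovering exactly the inequality defining $\mathscr{L}_{n,k,c}^{-1}(\pi)$.

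With the set identity in hand, the cardinality formula follows from two structural facts. First, as noted before the statement, whenever $\mathcal{J}\neq\mathcal{J}'$ in $\mathscr{P}_{k,c}^{(n)}(\pi)$ the corresponding products are disjoint; this reduces to the coordinatewise disjointness $\HH_\pi^{(n)}(I)\cap\HH_\pi^{(n)}(I')=\emptyset$ for $I\neq I'$, since two distinct tuples must differ in at least one coordinate. Hence the union on the right-hand side is a \emph{disjoint} union, so its cardinality is the sum over $\mathscr{P}_{k,c}^{(n)}(\pi)$ of the cardinalities of the products. Second, the cardinality of each Cartesian product factorizes as $\prod_{i=1}^k|\HH_\pi^{(n)}(J_i)|$, which yields the claimed formula.

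The individual steps are short, and the main obstacle is not any single estimate but rather the care needed in the bookkeeping: one must verify that the $J_i$ arising from a tuple are genuinely admissible index sets (members of $\mathcal{I}^{(n)}$ satisfying the $\mathscr{P}_{k,c}^{(n)}(\pi)$ constraint), and that the coordinatewise disjointness is strong enough to license the passage from $\left|\bigcup\right|$ to $\sum$. Once these points are secured, the theorem for general $c\ge 0$ is a verbatim extension of the $c=0$ case.
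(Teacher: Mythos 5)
Your proposal is correct and matches the paper's own argument essentially step for step: the paper also takes $J_i=\mathcal{A}_{\pi,x_i}$ for the forward inclusion, reverses the substitution for the converse, and obtains the cardinality formula from the coordinatewise disjointness $\HH_{\pi}^{(n)}(I)\cap\HH_{\pi}^{(n)}(I')=\emptyset$ for $I\neq I'$. The only cosmetic difference is that you state the key identity $\bigcup_i J_i=\mathcal{A}_\pi\cap\bigcup_i\mathcal{A}_{x_i}$ and the admissibility $J_i\in\mathcal{I}^{(n)}$ explicitly, which the paper leaves implicit.
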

As we saw, to estimate $\mathcal{M}_{n,k}^{-1}(\pi)$, we made use of $\mathscr{L}_{n,k,c}^{-1}(\pi)$, for $c\geq 0$, and to this end we need to count the number of elements of $\HH_{\pi}^{(n)}( J_i)$ for $(J_1,...,J_k)\in \mathscr{P}_{k,c}^{(n)}(\pi)$. In the rest of the chapter, we give a representation for the number of elements in $\HH_{\pi}^{(n)}( J_i)$.

For $J\in \mathcal{I}^{(n)}$, let
\[
\mathcal{R}_n(J)=\{\pi\in S_n: \ J\subset \mathcal{A}_\pi\}.
\]
Then, by inclusion-exclusion principle we have
\begin{equation}\label{inclusion-exclusion}
|\HH_{\pi}^{(n)}( I)|=\sum\limits_{I\subset J\subset\mathcal{A}_\pi} (-1)^{|J\setminus I|}|\mathcal{R}_n(J)|.
\end{equation}
To simplify this further, we introduce the type of a segment set $J$, $I\subset J\subset \mathcal{A}_\pi$, and we will see that the value of $|\mathcal{R}_n(.)|$ is identical for two segment sets of the same type. Formally, recall that $\bar{I}_\pi:=\mathcal{A}_\pi\setminus I$, and denote by $\bar{I}_\pi^{(i)}$, for $i=1,...,\|I\|+1$, the $i$-th segment of $\bar{I}_\pi$ ($i$-th from left when considered as a segment of $\pi$). Note that $\bar{I}_\pi^{(1)}$and $\bar{I}_\pi^{(\|I\|+1)}$ may be empty segments. 
The type of a segment set $J\in \mathcal{I}^{(n)}$, where $I\subset J\subset \mathcal{A}_\pi$, with respect to $\pi$ and $I$, is identified by
\[
\lambda:=(\lambda_1,...,\lambda_{\|I\|+1}),
\]
where, for $i=1,...,\|I\|+1$, $\lambda_i$ is identified by the quadruple
\[
\lambda_i:=(\lambda_i^{(1)},\lambda_i^{(2)},\lambda_i^{(3)},\lambda_i^{(4)})\in \N\times \N\times \{0,1\}\times \{0,1\},
\]
where
$\lambda_i^{(1)}:=|J\cap \bar{I}_\pi^{(i)}|$ is the number of adjacencies common to $J$ and $\bar{I}_\pi^{(i)}$; $\lambda_i^{(2)}:=\|J\cap \bar{I}_\pi^{(i)}\|$ is the number of segments of intersection of $J$ and $\bar{I}_\pi^{(i)}$; $\lambda_1^{(3)}=0$ and, for $i=2,...,\|I\|+1$, $\lambda_i^{(3)}=1$ if the most left adjacency of $\bar{I}_\pi^{(i)}$ is also in $J$ and otherwise $\lambda_i^{(3)}=0$; and finally $\lambda_{\|I\|+1}^{(4)}=0$, and for $i=1,...,\|I\|$, $\lambda_i^{(4)}=1$ if the most right adjacency of $\bar{I}_\pi^{(i)}$ is also in $J$ and otherwise $\lambda_i^{(4)}=0$. We need the following lemmas.
\begin{lemma}\label{permutation}
Let $I$ be a segment set of $S_n$ with $m$ adjacencies and $k$ segments. Then the number of permutations in $S_n$ containing $I$ is equal to  $2^{k}(n-m)!$.
\end{lemma}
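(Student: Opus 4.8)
The plan is to count $|\mathcal{R}_n(I)|$ directly by a block-decomposition/bijection argument. Write $I$ as the union of its $k$ pairwise strongly disjoint segments $s_1,\dots,s_k$, where $s_j$ has $\ell_j$ adjacencies, so that $m=\sum_{j=1}^k \ell_j$. Since a segment with $\ell_j$ adjacencies is a chain on $\ell_j+1$ distinct numbers and the segments are strongly disjoint, the total number of elements of $[n]$ appearing in $I$ is $\sum_{j=1}^k(\ell_j+1)=m+k$; I will call the remaining $n-m-k$ elements \emph{free}.

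First I would establish the structural fact that any $\pi$ with $I\subset\mathcal{A}_\pi$ must contain each segment $s_j=[n_0,\dots,n_{\ell_j}]$ as a contiguous block, read either left-to-right or right-to-left. This is because each adjacency $\{n_i,n_{i+1}\}\in\mathcal{A}_\pi$ forces $n_i$ and $n_{i+1}$ to be neighbours in the one-line word $\pi$; since every entry of $\pi$ has at most two neighbours, the internal vertices $n_1,\dots,n_{\ell_j-1}$ of the chain have both neighbours pinned down, so the whole chain occupies consecutive positions, and the only remaining freedom is its orientation (the two orderings carry the same adjacency set but yield distinct words, because the $n_i$ are distinct).

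Next I would set up the bijection. Collapse each segment $s_j$ to a single symbol, obtaining $k$ blocks together with the $n-m-k$ free elements, i.e. $n-m$ objects in total. To each pair consisting of a linear ordering of these $n-m$ objects and a choice of orientation in $\{\text{forward},\text{reversed}\}$ for each of the $k$ blocks, associate the permutation obtained by writing the objects in the chosen order and expanding each block into its chosen orientation. Conversely, given $\pi\supset I$, the structural fact lets one read off a unique ordering of blocks and free elements (the blocks are contiguous on disjoint element sets, so their relative order is unambiguous) together with a unique orientation of each block; these two assignments are mutually inverse.

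Counting the two sides of the bijection then gives $(n-m)!$ choices for the linear ordering and $2^{k}$ choices for the orientations, for a total of $2^{k}(n-m)!$, as claimed. The only point requiring care — and the place where a careless argument could over- or under-count — is the injectivity of the correspondence: one must verify that distinct (ordering, orientation) pairs never produce the same word and that every admissible $\pi$ is obtained exactly once. Both follow from the distinctness of the entries (so a block and its reversal are different words) and from the uniqueness of the block decomposition of $\pi$ supplied by the structural fact. I expect this verification, rather than the elementary counting, to be the main, though mild, obstacle.
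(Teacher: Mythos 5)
Your proposal is correct and follows essentially the same route as the paper's proof: collapse the $k$ segments into blocks, arrange the resulting $k+(n-m-k)=n-m$ objects in $(n-m)!$ ways, and multiply by $2^k$ for the orientations of the segments. The paper states this in two sentences; your version merely adds the (worthwhile but routine) verification that each segment must appear contiguously and that the correspondence is a bijection.
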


\begin{proof}
As the segment set $I$ has $m$ adjacencies and $k$ segments, each permutation containing $I$ has $n-m-k$ isolated points with respect to $I$. Therefore, noting that segments have two directions, we have $2^{k}(k+(n-m-k))!$ permutations containing $I$.
\end{proof}
\begin{lemma}\label{volume of R}
Let $I,J\in \mathcal{I}^{(n)}$ and $\pi\in S_n$, such that $I\subset J\subset \mathcal{A}_\pi$. Let $\lambda=(\lambda_i)_{1\leq i\leq \|I\|+1}$, with $\lambda_i=(\lambda_i^{(1)},\lambda_i^{(2)},\lambda_i^{(3)},\lambda_i^{(4)})$, for $i=1,...\|I\|+1$ be the type of $J$ with respect to $\pi$ and $I$. Then
\[
|\mathcal{R}_n(J)|=2^{\{ |I|+\sum\limits_{i=1}^{\|I\|+1} \lambda_i^{(2)} - \sum\limits_{i=1}^{\|I\|+1} (\lambda_i^{(3)}+\lambda_i^{(4)})\}}(n-|I|-\sum\limits_{i=1}^{\|I\|+1} \lambda_i^{(1)})!.
\] 
\end{lemma}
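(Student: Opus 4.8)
The plan is to reduce the entire computation to Lemma \ref{permutation}, which counts the permutations containing a fixed segment set purely in terms of its number of adjacencies and its number of segments. Applied to $J$ it gives $|\mathcal{R}_n(J)| = 2^{\|J\|}(n-|J|)!$, so it suffices to read off the number of adjacencies $|J|$ and the number of segments $\|J\|$ of $J$ from the type $\lambda$.

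Counting the adjacencies is immediate. Since $I \subset J \subset \mathcal{A}_\pi$, every adjacency of $J$ outside $I$ lies in $\bar{I}_\pi = \mathcal{A}_\pi \setminus I$ and hence in exactly one of its segments $\bar{I}_\pi^{(1)},\dots,\bar{I}_\pi^{(\|I\|+1)}$. By the definition $\lambda_i^{(1)} = |J \cap \bar{I}_\pi^{(i)}|$ this gives $|J| = |I| + \sum_{i=1}^{\|I\|+1}\lambda_i^{(1)}$, which is precisely the factorial appearing in the claim.

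The heart of the argument is the count of $\|J\|$, the number of maximal segments of $J$. I would view $\mathcal{A}_\pi$ as the edge set of the path $\pi_1 - \pi_2 - \cdots - \pi_n$ on $[n]$; the $\|I\|$ segments of $I$ are subpaths that, read along this path, alternate with the gaps $\bar{I}_\pi^{(i)}$, the two end gaps possibly empty and the internal gaps necessarily nonempty since consecutive segments of $I$ are distinct maximal runs. Inside gap $i$ the set $J$ contributes $\lambda_i^{(2)}$ maximal runs, so before accounting for fusions $J$ is a disjoint union of $\|I\| + \sum_i \lambda_i^{(2)}$ pieces: the segments of $I$ together with the gap-runs. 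A gap-run and a neighbouring segment of $I$ fall into the same segment of $J$ precisely when they meet at a gap boundary; the leftmost adjacency of $\bar{I}_\pi^{(i)}$ abuts the segment of $I$ on its left and belongs to $J$ exactly when $\lambda_i^{(3)} = 1$, and symmetrically its rightmost adjacency fuses with the segment on its right exactly when $\lambda_i^{(4)} = 1$. These are the only possible fusions, for two runs inside one gap are separated by an adjacency outside $J$, runs in different gaps are separated by a segment of $I$, and no segment of $I$ touches another directly. Crucially, the pieces are linearly ordered along the path and every fusion joins two pieces consecutive in this order, so the graph whose vertices are the pieces and whose edges are the fusions is a subgraph of a path, hence acyclic. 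Its number of connected components, which is exactly $\|J\|$, therefore equals the number of pieces minus the number of edges,
\[
\|J\| = \|I\| + \sum_{i=1}^{\|I\|+1}\lambda_i^{(2)} - \sum_{i=1}^{\|I\|+1}\bigl(\lambda_i^{(3)} + \lambda_i^{(4)}\bigr).
\]
Substituting this together with the value of $|J|$ into $2^{\|J\|}(n-|J|)!$ yields the stated expression.

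The step I expect to cause the most trouble is justifying that each of the $\sum_i(\lambda_i^{(3)}+\lambda_i^{(4)})$ fusions lowers the component count by exactly one, rather than merging two pieces that are already joined. The linear-ordering observation is what makes this clean, but I would still check it against the boundary configurations: an end gap, where one of $\lambda_i^{(3)},\lambda_i^{(4)}$ is forced to $0$ by convention; a gap whose single run fills it completely and is thus glued to the segments of $I$ on both sides, contributing two genuine and independent fusions among three distinct pieces; and a gap carrying several runs, where only its two extreme runs can fuse while the interior runs stay isolated. In every case the acyclicity furnished by the linear order guarantees the identity ``components $=$ pieces $-$ edges'', so no separate cycle-exclusion argument is needed.
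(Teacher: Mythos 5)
Your proposal is correct and takes essentially the same route as the paper: both reduce the count to Lemma \ref{permutation} via the two identities $|J| = |I| + \sum_{i}\lambda_i^{(1)}$ and $\|J\| = \|I\| + \sum_{i}\lambda_i^{(2)} - \sum_{i}\bigl(\lambda_i^{(3)}+\lambda_i^{(4)}\bigr)$. The only difference is one of detail: the paper simply asserts these two identities, whereas you justify the segment-count identity with the pieces-and-fusions (acyclicity) argument, which is a valid filling-in of the step the paper leaves implicit.
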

\begin{proof}
We have
\[
\|J\|=\|I\|+\sum\limits_{i=1}^{\|I\|+1} \lambda_i^{(2)} - \sum\limits_{i=1}^{\|I\|+1} (\lambda_i^{(3)}+\lambda_i^{(4)}),
\]
and also the number of adjacencies of $J$ is equal to
\[
|J|=|I|+\sum\limits_{i=1}^{\|I\|+1} \lambda_i^{(1)}.
\]
Therefore, Lemma \ref{permutation} finishes the proof.
\end{proof}
\begin{lemma}\label{number of J-type given}
Let $\pi\in S_n$ and $I\in \mathcal{I}^{(n)}$. The number of segment sets $J$, with $I\subset J\subset \mathcal{A}_\pi$ and with type $\lambda=(\lambda_i)_{1\leq i\leq \|I\|+1}$ with respect to $\pi$ and $I$, where $\lambda_i=(\lambda_i^{(1)},\lambda_i^{(2)},\lambda_i^{(3)},\lambda_i^{(4)})$ for $i=1,...\|I\|+1$, is
\[
\prod\limits_{i=1}^{\|I\|+1} {\lambda_i^{(1)}-1 \choose \lambda_i^{(2)}-1}{|\bar{I}_\pi^{(i)}|-\lambda_i^{(1)}-1 \choose \lambda_i^{(2)} -\lambda_i^{(3)}-\lambda_i^{(4)}}.
\] 
\end{lemma}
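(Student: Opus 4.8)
The plan is to reduce the count to an \emph{independent product over the complementary blocks} $\bar{I}_\pi^{(1)},\dots,\bar{I}_\pi^{(\|I\|+1)}$, and then to solve a single-block selection problem by elementary compositions. First I would observe that, since $I\subset J\subset \mathcal{A}_\pi$, the segment set $J$ is completely determined by the extra adjacencies $J\setminus I$ it takes from $\bar{I}_\pi=\mathcal{A}_\pi\setminus I$, and that every subset of $\mathcal{A}_\pi$ is automatically a segment set of $\pi$. Because the components $\bar{I}_\pi^{(i)}$ partition $\bar{I}_\pi$ and each quadruple $\lambda_i=(\lambda_i^{(1)},\lambda_i^{(2)},\lambda_i^{(3)},\lambda_i^{(4)})$ is defined purely in terms of $J\cap\bar{I}_\pi^{(i)}$, the assignment $J\mapsto\bigl(J\cap\bar{I}_\pi^{(1)},\dots,J\cap\bar{I}_\pi^{(\|I\|+1)}\bigr)$ is a bijection onto tuples of subsets of the blocks, and the type decomposes coordinatewise. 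Thus choosing a $J$ of type $\lambda$ amounts to choosing, independently for each $i$, a subset of $\bar{I}_\pi^{(i)}$ realizing $\lambda_i$, so the total count factors as $\prod_{i=1}^{\|I\|+1} N_i$, where $N_i$ is the number of admissible subsets of the single block $\bar{I}_\pi^{(i)}$. I would stress that no cross-block merging needs to be tracked here: the components of $I$ separating consecutive blocks already lie in $J$, so whether adjacent blocks fuse with them into one segment of $J$ affects only $\|J\|$ — which is exactly the content recorded in Lemma \ref{volume of R} — and not the gap-local data $\lambda_i$.

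Next I would compute $N_i$. Viewing $\bar{I}_\pi^{(i)}$ as a row of $|\bar{I}_\pi^{(i)}|$ consecutive adjacencies, we must select a subset of size $\lambda_i^{(1)}$ that breaks into exactly $\lambda_i^{(2)}$ maximal runs, with its leftmost (resp.\ rightmost) position included precisely when $\lambda_i^{(3)}=1$ (resp.\ $\lambda_i^{(4)}=1$). The selection decouples into (a) the ordered run-lengths and (b) the sizes of the unselected gaps surrounding the runs. For (a), writing $\lambda_i^{(1)}$ as an ordered sum of $\lambda_i^{(2)}$ positive parts gives $\binom{\lambda_i^{(1)}-1}{\lambda_i^{(2)}-1}$ compositions, which is the first factor. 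For (b), the $|\bar{I}_\pi^{(i)}|-\lambda_i^{(1)}$ unselected adjacencies are distributed into the $\lambda_i^{(2)}+1$ gaps $g_0,g_1,\dots,g_{\lambda_i^{(2)}}$ lying before, between, and after the runs; each internal gap $g_1,\dots,g_{\lambda_i^{(2)}-1}$ must be $\geq 1$ (else two runs fuse), while the end gaps are forced to be $0$ exactly when the corresponding boundary adjacency is selected, i.e.\ $g_0=0$ iff $\lambda_i^{(3)}=1$ and $g_{\lambda_i^{(2)}}=0$ iff $\lambda_i^{(4)}=1$.

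I would then finish (b) by stars-and-bars: after deleting the forced-zero end gaps, one is left with $\lambda_i^{(2)}+1-\lambda_i^{(3)}-\lambda_i^{(4)}$ gaps, each required to be positive and summing to $|\bar{I}_\pi^{(i)}|-\lambda_i^{(1)}$, which yields $\binom{|\bar{I}_\pi^{(i)}|-\lambda_i^{(1)}-1}{\lambda_i^{(2)}-\lambda_i^{(3)}-\lambda_i^{(4)}}$ solutions — the second factor. Multiplying the counts for (a) and (b) gives $N_i$, and taking the product over $i=1,\dots,\|I\|+1$ produces the claimed formula. The step demanding the most care — and the main obstacle — is the precise bookkeeping of the two end gaps: translating the boundary indicators $\lambda_i^{(3)},\lambda_i^{(4)}$ into the ``forced zero versus $\geq 1$'' constraints without double counting, and checking the degenerate cases (an empty block $\bar{I}_\pi^{(i)}$, or a block from which $J$ takes nothing, so $\lambda_i^{(1)}=\lambda_i^{(2)}=0$), where the binomial coefficients must be read with the convention that the unique empty configuration is counted once.
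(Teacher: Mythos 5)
Your proof is correct and takes essentially the same route as the paper's: decompose the choice of $J\setminus I$ independently over the blocks $\bar{I}_\pi^{(i)}$, count run lengths as compositions (giving $\binom{\lambda_i^{(1)}-1}{\lambda_i^{(2)}-1}$) and gap lengths by stars-and-bars with the end gaps forced to zero or to be positive according to $\lambda_i^{(3)},\lambda_i^{(4)}$ (giving $\binom{|\bar{I}_\pi^{(i)}|-\lambda_i^{(1)}-1}{\lambda_i^{(2)}-\lambda_i^{(3)}-\lambda_i^{(4)}}$), then multiply over $i$. The only difference is presentational: the paper works out only the case $(\lambda_i^{(3)},\lambda_i^{(4)})=(1,1)$ as a pair of composition equations and declares the remaining three cases ``similar'', whereas you handle all four uniformly through the forced-zero end-gap bookkeeping.
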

\begin{proof}
The idea is to consider segment $\bar{I}_\pi^{(i)}$ as a permutation and count number of possible ways one can choose a segment set $\tilde J_i$ from it with $\lambda_i^{(1)}$ number of adjacencies and $\lambda_i^{(2)}$ number of segments. More explicitly, for $i=1,...,\|I\|+1$, if $(\lambda_i^{(3)},\lambda_i^{(4)})=(1,1)$, then the number of ways we can do this is equal to the number of solutions of two independent equations
\[
\mathbb{X}_1+...\mathbb{X}_{\lambda_i^{(2)}}=\lambda_i^{(1)},
\]
with $\mathbb{X}_i\geq 1$, for $i=1,...,\lambda_i^{(2)}$, and
\[
\mathbb{Y}_2+...+\mathbb{Y}_{\lambda_i^{(2)}}=|\bar{I}_\pi^{(i)}|-\lambda_i^{(1)},
\]
with $\mathbb{Y}_i\geq 1$, for $i=2,...,\lambda_i^{(2)}-1$, which is equal to
\[
{\lambda_i^{(1)}-1 \choose \lambda_i^{(2)}-1}{|\bar{I}_\pi^{(i)}|-\lambda_i^{(1)}-1 \choose \lambda_i^{(2)} -2}={\lambda_i^{(1)}-1 \choose \lambda_i^{(2)}-1}{|\bar{I}_\pi^{(i)}|-\lambda_i^{(1)}-1 \choose \lambda_i^{(2)} -\lambda_i^{(3)}-\lambda_i^{(4)}}
\]
since $\lambda_i^{(3)}+\lambda_i^{(4)}=2$. Similarly, for the cases $(\lambda_i^{(3)},\lambda_i^{(4)})=(0,0),(0,1),(1,0)$, we can prove that the number of ways one can choose a segment set $\tilde J_i$ from it with $\lambda_i^{(1)}$ number of adjacencies and $\lambda_i^{(2)}$ number of segments is
\[
{\lambda_i^{(1)}-1 \choose \lambda_i^{(2)}-1}{|\bar{I}_\pi^{(i)}|-\lambda_i^{(1)}-1 \choose \lambda_i^{(2)} -\lambda_i^{(3)}-\lambda_i^{(4)}}.
\]
Multiplying all possibilities for $i=1,...,\|I\|+1$ yields the result.
\end{proof}
We are ready to count the number of elements of $\HH_\pi^{(n)}(I)$. 
\begin{theorem} \label{seth}
Let $\pi$ be a permutation in $S_n$, and $I\in \mathcal{I}^{(n)}$ be a segment set contained in $\pi$. Then
\begin{multline}
|\HH_{\pi}^{(n)}(I)|=\sum\limits_\lambda \left\lbrace (-1)^{\sum\limits_{i=1}^{\|I\|+1} \lambda_i^{(1)}} \prod\limits_{i=1}^{\|I\|+1} \left[  {\lambda_i^{(1)}-1 \choose \lambda_i^{(2)}-1}{|\bar{I}_\pi^{(i)}|-\lambda_i^{(1)}-1 \choose \lambda_i^{(2)} -\lambda_i^{(3)}-\lambda_i^{(4)}} \right]  \times \right. \\
\left. 2^{\{ |I|+\sum\limits_{i=1}^{\|I\|+1} \lambda_i^{(2)} - \sum\limits_{i=1}^{\|I\|+1} (\lambda_i^{(3)}+\lambda_i^{(4)})\}}(n-|I|-\sum\limits_{i=1}^{\|I\|+1} \lambda_i^{(1)})! \right\rbrace,
\end{multline}
where the summation is over  all $\lambda=((\lambda_i^{(1)},\lambda_i^{(2)},\lambda_i^{(3)},\lambda_i^{(4)}))_{1\leq i\leq \|I\|+1}$ with 
\[
(\lambda_i^{(1)},\lambda_i^{(2)},\lambda_i^{(3)},\lambda_i^{(4)})\in \{1,...,|\bar{I}_\pi^{(i)}|\} \times \{1,...,\min\{\lambda_i^{(1)}, |\bar{I}_\pi^{(i)}|+1-\lambda_i^{(1)}\}\}\times \{0,1\}\times \{0,1\}.
\]
\end{theorem}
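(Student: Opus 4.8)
The plan is to begin from the inclusion--exclusion expansion (\ref{inclusion-exclusion}),
\[
|\HH_{\pi}^{(n)}(I)|=\sum_{I\subset J\subset\mathcal{A}_\pi} (-1)^{|J\setminus I|}\,|\mathcal{R}_n(J)|,
\]
and to reorganize the sum over $J$ by grouping together all $J$ sharing a common type $\lambda$ with respect to $\pi$ and $I$. The driving observation is that the summand $(-1)^{|J\setminus I|}|\mathcal{R}_n(J)|$ is constant on each type class, so once we know how many $J$ realize a given type, the sum over $J$ collapses into a sum over types with that count as a weight.

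First I would verify that the sign is a function of the type alone. Because $I\subset J\subset\mathcal{A}_\pi$, every adjacency of $J\setminus I$ lies in one of the gaps $\bar I_\pi^{(1)},\dots,\bar I_\pi^{(\|I\|+1)}$, and these gaps are pairwise disjoint; hence $J\setminus I=\bigsqcup_{i} (J\cap \bar I_\pi^{(i)})$ and $|J\setminus I|=\sum_{i=1}^{\|I\|+1}\lambda_i^{(1)}$, giving $(-1)^{|J\setminus I|}=(-1)^{\sum_i \lambda_i^{(1)}}$. The magnitude $|\mathcal{R}_n(J)|$ is already shown to depend only on $\lambda$ in Lemma \ref{volume of R}, producing the factor $2^{\{|I|+\sum_i \lambda_i^{(2)}-\sum_i(\lambda_i^{(3)}+\lambda_i^{(4)})\}}(n-|I|-\sum_i\lambda_i^{(1)})!$. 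These two facts together confirm the key claim that the entire summand is determined by $\lambda$.

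Next I would count, for each admissible type $\lambda$, the number of segment sets $J$ with $I\subset J\subset\mathcal{A}_\pi$ having that type; this is exactly Lemma \ref{number of J-type given}, yielding $\prod_{i}\binom{\lambda_i^{(1)}-1}{\lambda_i^{(2)}-1}\binom{|\bar I_\pi^{(i)}|-\lambda_i^{(1)}-1}{\lambda_i^{(2)}-\lambda_i^{(3)}-\lambda_i^{(4)}}$. Since each $J$ has a unique type, the type classes partition the index set of (\ref{inclusion-exclusion}); substituting the common summand value and this count, and noting that the factors organize into a product over the gaps $i=1,\dots,\|I\|+1$, yields the stated formula.

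The part that demands the most care is pinning down exactly which types $\lambda$ occur and matching them to the summation range. Within the $i$-th gap one is choosing $\lambda_i^{(1)}$ adjacencies forming $\lambda_i^{(2)}$ maximal segments inside a path of $|\bar I_\pi^{(i)}|$ adjacencies, which is realizable iff $\lambda_i^{(2)}\le\lambda_i^{(1)}$ (each segment needs an adjacency) and $\lambda_i^{(2)}\le |\bar I_\pi^{(i)}|+1-\lambda_i^{(1)}$ (the segments and the unselected adjacencies forced between them must fit), explaining the bound $\min\{\lambda_i^{(1)},|\bar I_\pi^{(i)}|+1-\lambda_i^{(1)}\}$; the boundary conventions $\lambda_1^{(3)}=0$ and $\lambda_{\|I\|+1}^{(4)}=0$ encode that the outer gaps adjoin an adjacency of $I$ on only one side. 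The genuinely delicate bookkeeping is the degenerate contributions --- gaps that $J$ does not meet (type $(0,0,0,0)$) and the possibly empty outer gaps $\bar I_\pi^{(1)}$, $\bar I_\pi^{(\|I\|+1)}$ --- which contribute a factor $1$ and must be absorbed consistently into the range of summation using the boundary conventions for the binomial coefficients. Once these degenerate cases are checked, the remainder of the argument is the routine regrouping described above.
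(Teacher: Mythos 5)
Your proposal is correct and follows exactly the paper's own route: the paper's proof is precisely the ``direct application'' of the inclusion--exclusion identity (\ref{inclusion-exclusion}) together with Lemma \ref{volume of R} (the summand depends only on the type $\lambda$) and Lemma \ref{number of J-type given} (the number of $J$ of each type), which is the regrouping-by-type argument you describe. Your additional observations --- that $|J\setminus I|=\sum_i \lambda_i^{(1)}$ because the gaps $\bar{I}_\pi^{(i)}$ partition $\mathcal{A}_\pi\setminus I$, and that the degenerate types must be handled by the binomial-coefficient conventions --- are exactly the details the paper leaves implicit, so nothing further is needed.
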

\begin{proof}
The proof is a direct application of Lemmas \ref{volume of R} and \ref{number of J-type given}, and (\ref{inclusion-exclusion}), inclusion-exclusion principle.
\end{proof}




\end{document}